\pgfplotsset{compat=1.8}
\def\tagform@#1{\maketag@@@{\ignorespaces#1\unskip\@@italiccorr}}
\let\orgtheequation\theequation
\def\theequation{(\orgtheequation)}
\def\equationautorefname~{}
\newtheorem{theorem}{Theorem}
\newaliascnt{lemma}{theorem}
\newaliascnt{proposition}{theorem}
\newtheorem{proposition}[proposition]{Proposition}
\newaliascnt{corollary}{theorem}
\newtheorem{corollary}[corollary]{Corollary}
\newaliascnt{conjecture}{theorem}
\newaliascnt{example}{theorem}
\newaliascnt{definition}{theorem}
\newtheorem{definition}[definition]{Definition}
\theoremstyle{remark}
\newtheorem*{remark}{Remark}
\newcommand{\RR}{{\mathbb{R}}}
\newcommand{\param}{{(0.4, -0.16, 5, -1, 30)}}
\newcommand{\circled}[2][]{%
  \tikz[baseline=(char.base)]{%
    \node[shape = circle, draw, inner sep = 1pt]
    (char) {\phantom{\ifblank{#1}{#2}{#1}}};%
    \node at (char.center) {\makebox[0pt][c]{#2}};}}
\newcommand{\RNum}[1]{\uppercase\expandafter{\romannumeral #1\relax}}
\begin{document}
\captionsetup[figure]{labelfont={bf},labelformat={default},labelsep=space,name={Fig.}}
\title[Fourth order Diffusion-Driven Instability]{Diffusion-Driven Instability of a fourth order system}

\author{Jooyeon Chung}
\address{Department of Mathematics, University of Illinois, Urbana, IL 61801, U.S.A.}
\email{jy55266\@@gmail.com}

\date{\today}
\keywords{Turing diffusion-driven instability, reaction-diffusion system, bi-Laplacian, fourth order}
\subjclass[2010]{\text{Primary 35B36. Secondary 35P15, 35K57, 92C15}}

\begin{abstract}
We analyze diffusion-driven (Turing) instability of a reaction-diffusion system. The innovation is that we replace the traditional Laplacian diffusion operator with a combination of the fourth order bi-Laplacian operator and the second order Laplacian. We find new phenomena when the fourth order and second order terms are competing, meaning one of them stabilizes the system whereas the other destabilizes it. We characterize Turing space in terms of parameter values in the system, and also find criteria for instability in terms of the domain size and tension parameter.
\end{abstract}

\maketitle

\section{\bf Introduction}
We characterize the Turing space of two-species reaction-diffusion mechanisms with fourth order bi-Laplacian type diffusion. Alan Turing conjectured a mathematical mechanism which explains how two diffusing morphogen populations interact to generate patterns in biology \cite{DMO94, MWBG12, M022, NMMWS03,T52}. This mechanism is now known as Turing instability or diffusion-driven instability. The idea is that two quantities, the activator and inhibitor, satisfy coupled reaction-diffusion equations. These equations admit a linearly stable spatially homogeneous steady state when diffusion is absent, but this homogeneous steady state becomes linearly unstable in the presence of diffusion, initiating a spatially varying inhomogeneous state, or pattern. The space of parameters for which Turing instability occurs is called the Turing space.

In standard Turing analysis, the Laplacian operator $\Delta u = \nabla\cdot\nabla u$
acts for diffusion of the activator and inhibitor, and the domain is fixed. Recent work in Turing's theory has extended applicability of the method, such as by considering growing domains, which are biologically relevant since actual organisms are growing as patterns are forming \cite{KG, MGM10, PSPBM04}. 
In this paper, we will consider a fixed domain but allow the activator and inhibitor to diffuse according to a bi-Laplacian type operator 
\[
\Delta\Delta u -\tau \Delta u
\] that includes both fourth order and second order terms whose relative importance is determined by the tension coefficient. Our analysis applies in all dimensions.

We characterize the parameter values forming the Turing space (Theorem~\ref{thm:turing}), meaning the parameter values for which Turing instability occurs for a given domain. The fourth order situation is different from the standard second order situation because two different types of diffusion, fourth order and second order terms, can compete. The fourth order term stabilizes the system whereas the second order term destabilizes the system, when the tension parameter is negative. Negative tension parameter is considered as destabilization since the backwards Laplacian is ill-posed. This competing situation leads to negative eigenvalues of the diffusion operator, which was not considered in the Laplacian Turing analysis. We show that when competition of two diffusions happens, Turing instability always occurs if we are willing to vary the domain (Corollary~\ref{cor:turing}). One might think it is obvious instability occurs simply because of negative eigenvalues. However, it actually relies upon properties of the spectrum established in the author's paper \cite{CC17}, as we now explain.

To refine our understanding of the Turing space, we identify a cross-section of Turing space in terms of domain and tension parameter (Theorem~\ref{thm:instabilityregion}). We certify for which length of the domain we obtain Turing instability, at least in the one-dimensional case. We fix the reaction parameters but vary the size of domain and tension parameter, and investigate how these changes affect occurrence of Turing instability. This investigation can be done since in \cite{CC17} we analyzed properties of the spectrum of the bi-Laplacian type operator with the natural (free) boundary conditions in one dimension. We find new phenomena when the fourth order and second order terms are competing. Having negative eigenvalues for the diffusion operator does not  by itself make Turing instability occur. Additional conditions need to be satisfied.

To conclude the paper, we apply an analogous cross-sectional Turing analysis to the periodic boundary condition case in one dimension. Even though the periodic boundary condition is not so biologically relevant, it is worth to consider in a sense of providing motivation and insight. The periodic case can be analyzed exactly because the spectrum of the bi-Laplacian type operator for the periodic boundary conditions can be computed exactly. Therefore, we also treat this case and compare the two situations (free and periodic). We find that overall shape of the cross-sectional Turing space for periodic situation is similar to the free case (Figs.~\ref{fig:three_instability_regions} and \ref{fig:periodicinstabilityregions}), which provides insight into the shape of cross-sectional Turing space for the more difficult free case.

All figures presented in this paper were created by the author using the programs \emph{Mathematica} and \emph{Matlab}.

 \subsection*{\bf Related literature}
Although Turing's theory is mostly considered as biological pattern formation, the idea of diffusion-driven instability is not restricted to biology. The mathematical framework can be generally applied wherever the populations can be considered as random moving reactive materials. For instance, researchers have identified Turing-like patterns in the distribution of species in ecological systems, such as the predator-prey model, where the prey acts as activator while the predator acts as inhibitor \cite{AKS11, LHL09, LMP12, NKL95, SEL97}.

\subsubsection*{Growing domain}
It is a natural question to ask how the reaction-diffusion model produces spatial patterns via Turing instability on ``growing" domains. Crampin \emph{et al.\ }were the first researchers to consider the domain growth effects in the reaction-diffusion models \cite{CGM99}. Plaza \emph{et al.\ }\cite{PSPBM04}, Madzvamuse \emph{et al.\ }\cite{MGM10} investigated the role of growth in pattern formation considering Turing instability. For instance, they found that an activator-activator model may give Turing patterns in the presence of domain growth. Such choice of kinetics cannot exhibit Turing instability on fixed domains. Furthermore, a recent paper by Klika and Gaffney \cite{KG} pointed out that analysis of Turing instability on growing domains is even more complicated than Madzvamuse \emph{et al.\ }\cite{MGM10} have considered. They emphasized the history dependence of the stability conditions and the transient nature of the unstable modes with faster growth. An interesting future direction is to apply these conditions for growing domains to the bi-Laplacian type diffusion considered in this paper.

\subsubsection*{Plate problems}
This paper includes analysis using properties of the spectrum of the free rod under tension and compression \cite{CC17}. The rod is the one-dimensional case of the plate. Plate problems are fourth order analogues of membrane problems, with the bi-Laplacian operator taking the place of the Laplacian. The fourth order problems with appropriate boundary conditions have modeled a number of plates with physically relevant conditions. For example, Sweers recently gave a survey of sign- and positivity-preserving properties of rod and plate problems with certain boundary conditions \cite{Sweers(16)}. More recently, Ashbaugh \emph{et al.\ }proved an isoperimetric inequality for the first eigenvalue of the clamped plate under compression for a small range of compression $\tau<0$ \cite{ABM17}. Our investigation in this paper connects the analysis of fourth order plate problem to Turing's model of pattern formation in biology. 

Lewis employed the fourth order type diffusion in a plant-herbivore model \cite{L94}. He showed that the coupling of herbivore dispersal with plant and herbivore dynamics gives rise to both persistent and transient spatial patterns. 

\subsection*{\bf Positivity preservation and thin fluid film diffusion}
Turing instability for fourth order diffusion with a second order term is comprehensively analyzed in this paper. A disadvantage of the fourth order diffusion is that it does not satisfy the minimum principle. Initial data that is positive can evolve to become negative at some point, at a later time, which is not biologically reasonable.

However, the fourth order nonlinear ``thin fluid film equation" that preserves positivity gives a way to solve this problem. For example, 
\begin{equation}\label{eqn:fluid_film}
h_t=-(h^n h_{xxx})_x\pm\tau(h^m h_x)_x
\end{equation} is known to have a ``weak minimum principle" for a sufficiently large value $n$, in that interior finite-time singularities in \eqref{eqn:fluid_film} are forbidden for $n\geq 3.5$ \cite{BBDK94}. Furthermore, Bertozzi and Pugh proved global positivity preservation when $n\geq3.5$ \cite{BP98}. Linearizing such a PDE around a constant steady state gives a linear fourth order PDE of the type considered in my research. Hence the nonlinear ``thin fluid film" PDE with additional reaction terms may be an interesting question for future research in $4$th order pattern formation.

\section{\bf Results on Turing space for fourth order diffusion operator}\label{sec:turingp}
In order to formulate the Turing instability results, we need to set up the reaction-diffusion system, establish notation for the steady state, and specify the boundary conditions and eigenvalues of the diffusion operator.

The interaction of two chemicals, activator $u$ and inhibitor $v$, gives a reaction-diffusion system of equations 
\begin{align} 
\frac{\partial u}{\partial t} &= (-\Delta^2 u + \tau\Delta u) +  f(u, v) \label{eqn:activator}\\
\frac{\partial v}{\partial t} &= k(-\Delta^2 v + \tau\Delta v) +  g(u, v), \label{eqn:inhibitor}
\end{align}
where the Laplacian is
\[
\Delta u=\frac{\partial^2 u}{\partial x_1^2} + \dots + \frac{\partial^2 u}{\partial x_n^2},
\] the bi-Laplacian is
\[
\Delta^2 u=\Delta\Delta u,
\]
$\tau$ is a ``tension" coefficient, $k>0$ is a proportionality constant of diffusion (the ``diffusivity"), and $f$ and $g$ model the reaction kinetics. The bi-Laplacian type operator $\Delta^2-\tau\Delta$ includes both $4$th order and $2$nd order terms whose relative importance is determined by $\tau$. Even though the terminology is related to the vibrating plate model \cite[Section 2]{C11} and it is not relevant to diffusion, we still call $\tau$ the tension coefficient. 

We fix the homogeneous steady state $(u_0, v_0)\in \RR^2$ of \eqref{eqn:activator}--\eqref{eqn:inhibitor} to be the solution of
\begin{equation*}
f(u_0, v_0)=0, \quad g(u_0, v_0)=0,
\end{equation*} and the partial derivatives of $f$ and $g$ to be evaluated at the steady state $(u_0, v_0)$, so that
\[
f_u=f_u(u_0, v_0),\quad f_v=f_v(u_0, v_0),
\] throughout the paper, and similarly for $g_u$ and $g_v$.

For the domain $\Omega \subset \mathbb{R}^n$, we work with the natural (free) boundary conditions associated with the diffusion operator $\Delta^2-\tau\Delta$. In dimension $n=2$, this means $u$ and $v$ satisfy boundary conditions of the type
\begin{align}
&\frac{\partial^2\phi}{\partial n^2}=0 &\text{on  }& \partial\Omega,\label{BC1}\\
&\tau\frac{\partial \phi}{\partial n} - \frac{\partial(\Delta \phi)}{\partial n} - \frac{\partial}{\partial s}\left(\frac{\partial^2\phi}{\partial s\partial n} - K(s)\frac{\partial \phi}{\partial s}\right) = 0 &\text{on  }& \partial\Omega,\label{BC2}
\end{align} where $n$ denotes outward unit normal derivative, $s$ the arclength, and $K$ the curvature of $\partial\Omega$. For $n$-dimension, the natural (free) boundary conditions for $\Delta^2-\tau\Delta$ are stated in \cite[Proposition 5]{C11}. The natural boundary condition \eqref{BC2} with $\phi=u$ and $\phi=v$ imply that mass is conserved by the diffusion operator. 

The eigenvalues $\mu_j=\mu_j(\Omega, \tau)$ of the operator $\Delta^2-\tau\Delta$ are governed by the differential equation
\begin{equation}\label{DE}
\Delta^2u-\tau \Delta u=\mu u
\end{equation} together with the natural boundary conditions \eqref{BC1}--\eqref{BC2} on $\Omega$, and are listed in increasing order as
\[
\mu_1\leq \mu_2\leq\mu_3\leq\cdots \to \infty.
\] There is always a zero eigenvalue, with constant eigenfunction. When $\tau\geq 0$, this zero eigenvalue is the lowest eigenvalue. When $\tau<0$, there is at least one negative eigenvalue. For more on the spectrum and the relevant Sobolev spaces and bilinear forms, see \cite{C11, CC17}.

\begin{remark} Imposing Dirichlet boundary conditions would cause a flux of $u$ and $v$ through the boundary, so there might be some loss of spatial patterns. Therefore we do not consider Dirichlet conditions. On the other hand, we will investigate a simpler boundary condition at the end of the paper, that is, the periodic boundary conditions in one dimension.
\end{remark}

\noindent Notice we have the same diffusion operator for both activator and inhibitor, up to constant multiple. Hence we can expand both $u$ and $v$ in terms of the same eigenfunctions, to carry out the Turing instability analysis, in Section~\ref{sec:pfthm_turing}.\\

In the next definition, we need a system of ordinary differential equations
\begin{align} 
\frac{\partial u}{\partial t} &= f(u, v) \label{ODE1}\\
\frac{\partial v}{\partial t} &= g(u, v), \label{ODE2}
\end{align} which is same as the system \eqref{eqn:activator}--\eqref{eqn:inhibitor} without the diffusion terms.

\begin{definition}[Turing space]
Consider a smoothly bounded domain $\Omega\subset \RR^n$. The reaction diffusion system \eqref{eqn:activator}--\eqref{BC2} admits \emph{Turing instability} if the homogeneous steady state $(u_0, v_0)$ is linearly asymptotically stable to small perturbations in the absence of diffusion (meaning for the ODE system \eqref{ODE1}--\eqref{ODE2}), but linearly unstable to small \emph{spatial} perturbations when diffusion is present (meaning for the PDE system \eqref{eqn:activator}--\eqref{BC2}).

The \emph{Turing space} for $\Omega$ is the space of parameters giving \emph{Turing instability}:
\begin{align*}
\begin{split}
TS(\Omega)=\{(f_u, f_v, g_u, g_v,& k, \tau)\in\RR^6: \text{the homogeneous steady state $(u_0, v_0)$}\\
 &\text{is linearly asymptotically stable in the absence of diffusion}\\
&\qquad\text{ but unstable when diffusion is present} \}.
\end{split}
\end{align*} For convenience, we use notation $\vec{p}=(f_u, f_v, g_u, g_v, k)\in\RR^5$ as vector of reaction-diffusion parameters. With fixed $\tau$, the \emph{Turing space} for $\Omega$ and $\tau$ is the cross-section
\begin{align*}
TS(\Omega, \tau)&=\{\vec{p}\in\RR^5: (\vec{p}, \tau)\in TS(\Omega)\}.
\end{align*}
\end{definition}

In this section, we will fix $\tau$ and find conditions for the reaction-diffusion parameter vector $\vec{p}$ to get a Turing instability on the domain $\Omega$.


We define three quantities which will be used in the following discussion: 
\begin{align}
A(\vec{p})&=\frac{f_u+g_v}{1+k},\label{A}\\
\begin{split}
a(\vec{p}), b(\vec{p})&=\frac{(kf_u+g_v)\pm\sqrt{(kf_u+g_v)^2-4k(f_ug_v-f_vg_u)}}{2k},\label{mupm}\\
\text{where $a$ cor}&\text{responds to the minus root and $b$ corresponds to the plus root.}
\end{split}
\end{align}
Recall that $\mu_j=\mu_j(\Omega, \tau)$ denotes the $j$th eigenvalue of the diffusion operator $\Delta^2-\tau\Delta$ with natural boundary conditions \eqref{BC1}--\eqref{BC2} on the domain $\Omega$. Define
\[
\operatorname{Spec}(\Omega, \tau)=\text{spectrum}=\{\mu_j(\Omega, \tau): j=1, 2, 3, \dots\}.
\]
\begin{theorem}[Characterization of Turing space for fixed domain]\label{thm:turing}
Given the domain $\Omega$, if $\tau\geq 0$ then the Turing space is
\[
TS(\Omega, \tau) =
\{ \vec{p} \in \mathbb{R}^5 : \text{ \eqref{firstcond}--\eqref{fourthcond} hold, and $\operatorname{Spec}(\Omega, \tau) \cap\left(a(\vec{p}), b(\vec{p})\right)\neq \emptyset$}\},
\] and if $\tau<0$ then the Turing space is
\begin{align*}
\begin{split}
TS(\Omega, \tau) =\{\vec{p} \in \mathbb{R}^5 : &\text{ either \eqref{firstcond}--\eqref{thirdcond} hold and $\operatorname{Spec}(\Omega, \tau) \cap\left(a(\vec{p}), b(\vec{p})\right)\neq \emptyset$,}\\
&   \text{    or \eqref{firstcond}--\eqref{secondcond} hold and $\mu_1< A(\vec{p})$}\},
\end{split}
\end{align*}
where the conditions are
\begin{align}
f_u + g_v < 0, \label{firstcond}\\
f_u g_v - f_v g_u > 0, \label{secondcond}\\
(kf_u + g_v)^2 - 4 k(f_u g_v - f_v g_u) > 0, \label{thirdcond}\\
kf_u + g_v > 0.\label{fourthcond}
\end{align} 
\end{theorem}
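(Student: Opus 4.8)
The plan is to linearize both the ODE system \eqref{ODE1}--\eqref{ODE2} and the PDE system \eqref{eqn:activator}--\eqref{BC2} about the homogeneous steady state $(u_0,v_0)$ and to reduce each stability question to the spectrum of a family of $2\times 2$ matrices indexed by the diffusion eigenvalues $\mu_j$. For the ODE the linearization is governed by the Jacobian $J=\left(\begin{smallmatrix} f_u & f_v\\ g_u & g_v\end{smallmatrix}\right)$, and $(u_0,v_0)$ is linearly asymptotically stable exactly when $\operatorname{tr}J<0$ and $\det J>0$, which are precisely \eqref{firstcond} and \eqref{secondcond}. For the PDE I would expand each perturbation in the eigenfunctions of $\Delta^2-\tau\Delta$; these form a complete system by the spectral theory in \cite{C11,CC17}, and because the diffusion operators for $u$ and $v$ agree up to the factor $k$, the \emph{same} eigenfunctions diagonalize both components. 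Since the diffusion term in \eqref{eqn:activator}--\eqref{eqn:inhibitor} is $-(\Delta^2-\tau\Delta)$, the coefficient of the $j$th mode evolves by $M_j=\left(\begin{smallmatrix} f_u-\mu_j & f_v\\ g_u & g_v-k\mu_j\end{smallmatrix}\right)$, and the steady state is linearly unstable for the PDE if and only if some $M_j$ has an eigenvalue with positive real part.

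By the standard Routh--Hurwitz criterion, a real $2\times 2$ matrix has an eigenvalue with positive real part if and only if $\operatorname{tr}>0$ or $\det<0$. I would therefore compute $\operatorname{tr}M_j=(f_u+g_v)-(1+k)\mu_j$ and $\det M_j=k\mu_j^{2}-(kf_u+g_v)\mu_j+(f_ug_v-f_vg_u)$. The trace condition rearranges to $\operatorname{tr}M_j>0\iff\mu_j<A(\vec{p})$, and since the $\mu_j$ increase this holds for some $j$ exactly when $\mu_1<A(\vec{p})$. The determinant is a quadratic in $\mu_j$ with positive leading coefficient $k$, so it attains negative values if and only if it has two distinct real roots; the discriminant condition for this is exactly \eqref{thirdcond}, the roots are precisely $a(\vec{p})$ and $b(\vec{p})$ of \eqref{mupm}, and then $\det M_j<0\iff\mu_j\in(a,b)$. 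Hence some mode satisfies $\det M_j<0$ if and only if \eqref{thirdcond} holds and $\operatorname{Spec}(\Omega,\tau)\cap(a,b)\neq\emptyset$.

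It remains to assemble these two instability mechanisms under the two sign regimes of $\tau$, using two consequences of the reaction conditions: \eqref{firstcond} gives $A(\vec{p})<0$, and \eqref{secondcond} with $k>0$ gives the product of roots $ab=(f_ug_v-f_vg_u)/k>0$, so $a$ and $b$ share a sign, determined through $a+b=(kf_u+g_v)/k$ by \eqref{fourthcond}. When $\tau\geq0$ the setup gives $\mu_1=0$ and all $\mu_j\geq0$; then $\mu_1<A(\vec{p})<0$ is impossible, so the trace mechanism never fires and instability is purely of determinant type, and $\operatorname{Spec}(\Omega,\tau)\cap(a,b)\neq\emptyset$ together with $ab>0$ forces $b>0$, hence both roots positive, which is \eqref{fourthcond}; this recovers the stated characterization \eqref{firstcond}--\eqref{fourthcond} plus the spectral intersection. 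When $\tau<0$ the setup guarantees $\mu_1<0$, so both mechanisms are available: either \eqref{thirdcond} holds and $\operatorname{Spec}(\Omega,\tau)\cap(a,b)\neq\emptyset$ (now possibly through a negative eigenvalue when both roots are negative, so \eqref{fourthcond} is no longer forced), or $\mu_1<A(\vec{p})$. Conjoining each branch with the stability conditions \eqref{firstcond}--\eqref{secondcond} gives exactly the two-branch description of $TS(\Omega,\tau)$.

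The main difficulty I anticipate is organizational rather than computational: proving the characterization is simultaneously necessary and sufficient requires checking that no unstable mode is overlooked and that the conditions recombine correctly for each sign of $\tau$. The two delicate points are that the ever-present zero eigenvalue (constant eigenfunction) can never trigger the determinant mechanism, since $ab>0$ forces $0\notin(a,b)$, so the zero mode reduces to the stable Jacobian $J$; and that the trace mechanism is genuinely excluded when $\tau\geq0$ because $A(\vec{p})<0$ while $\mu_1=0$. Everything else is the standard $2\times 2$ Routh--Hurwitz bookkeeping together with the spectral facts on $\operatorname{Spec}(\Omega,\tau)$ recorded in the setup and in \cite{CC17}.
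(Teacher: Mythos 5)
Your proposal is correct and follows essentially the same route as the paper: linearize the ODE to get \eqref{firstcond}--\eqref{secondcond}, expand the PDE perturbation in the shared eigenfunctions of $\Delta^2-\tau\Delta$, and apply the Routh--Hurwitz dichotomy (trace positive or determinant negative) to the per-mode matrix, which is exactly the paper's quadratic $\lambda^2+F(\mu_j)\lambda+H(\mu_j)=0$ with $F=-\operatorname{tr}M_j$ and $H=\det M_j$. Your handling of the two sign regimes of $\tau$, and your observations that the zero mode cannot fire either mechanism and that \eqref{fourthcond} is forced only when all $\mu_j\geq 0$, match the paper's argument.
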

Note condition \eqref{firstcond} implies $A(\vec{p})<0$.

\begin{remark}
When $\tau\geq0$, conditions \eqref{firstcond} and \eqref{fourthcond} imply $k\neq 1$, meaning one of the activator and inhibitor must diffuse faster than the other. When $\tau<0$, since the condition \eqref{fourthcond} need not be assumed in Theorem~\ref{thm:turing}, we see $k$ can equal $1$, meaning the activator and inhibitor possibly diffuse at the same rate.
\end{remark}

We characterized the Turing space for a fixed domain $\Omega$ in Theorem~\ref{thm:turing}. When $\tau\geq0$, all four conditions \eqref{firstcond}--\eqref{fourthcond} are the same as the standard Turing space for the Laplacian. When $\tau<0$, only the first two or three of these conditions are required to be in the Turing space, so the Turing space for the fourth order operator $\Delta^2 - \tau\Delta$ is larger than standard Turing space. 

In the following corollary, we show that we can always get Turing instability when $\tau<0$, provided we are willing to vary the domain. The theorem is proved in Section~\ref{sec:pfthm_turing}, and its corollary in Section~\ref{sec:pfthm_muast,pfcor_turing}.

\begin{corollary}\label{cor:turing}
If $\tau<0$ and $\vec{p}$ satisfies \eqref{firstcond} and \eqref{secondcond}, then Turing instability occurs for some domain $\Omega$.
\end{corollary}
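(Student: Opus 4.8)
The plan is to obtain the instability entirely through the second alternative in the $\tau<0$ case of Theorem~\ref{thm:turing}: the branch that asks only for \eqref{firstcond}--\eqref{secondcond} together with $\mu_1<A(\vec p)$. Both \eqref{firstcond} and \eqref{secondcond} are assumed, and \eqref{firstcond} forces $A(\vec p)=\frac{f_u+g_v}{1+k}<0$. So the corollary reduces to a single spectral statement, independent of the reaction parameters beyond the sign of $A$: \emph{there is a domain $\Omega$ with $\mu_1(\Omega,\tau)<A(\vec p)$}. I would never invoke \eqref{thirdcond}--\eqref{fourthcond} nor the interval $(a(\vec p),b(\vec p))$; the mechanism here is purely the negative part of the spectrum that appears once $\tau<0$.

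To locate such a domain I would exploit the scaling of the operator. Setting $\Omega_L=L\Omega$ and substituting $u(x)=\phi(x/L)$ into \eqref{DE} shows that the natural boundary conditions \eqref{BC1}--\eqref{BC2} are preserved and that
\[
\mu_1(\Omega_L,\tau)=L^{-4}\,\mu_1(\Omega,\tau L^2).
\]
Fix any reference domain $\Omega$ and let $L\to0^+$, so the effective tension $\sigma=\tau L^2\to0^-$. Using the Rayleigh quotient $\mu_1(\Omega,\sigma)=\inf_\phi\bigl(\int|\Delta\phi|^2+\sigma\int|\nabla\phi|^2\bigr)/\int\phi^2$ with a trial function oscillating in a single coordinate, $\phi(x)=\sin(\sqrt{-\sigma/2}\,x_1)$, one gets a bound of the form $\mu_1(\Omega,\sigma)\le c\,\sigma$ with $c>0$, which is negative because $\sigma<0$. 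Feeding this into the scaling identity yields $\mu_1(\Omega_L,\tau)\le c\,\tau^2/\sigma\to-\infty$ as $L\to0^+$. Hence $\inf_\Omega\mu_1(\Omega,\tau)=-\infty$, so by continuity some small $\Omega_L$ satisfies $\mu_1(\Omega_L,\tau)<A(\vec p)$, and Theorem~\ref{thm:turing} certifies $\vec p\in TS(\Omega_L,\tau)$.

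The crux---and where the dependence on \cite{CC17} enters---is the spectral step, not the bookkeeping. As the introduction warns, one cannot simply say ``a negative eigenvalue exists, so instability follows'': the excerpt guarantees only $\mu_1<0$, whereas the threshold $A(\vec p)$ may be an arbitrarily large negative number, so I truly need the unboundedness $\mu_1\to-\infty$ and not mere negativity. The delicate point in making the Rayleigh estimate rigorous on a bounded domain is that the gain is boundary-driven: the one-dimensional identity $\int(\phi'')^2+\tau\int(\phi')^2+\frac{\tau^2}{4}\int\phi^2=\int(\phi''-\frac{\tau}{2}\phi)^2+\tau[\phi'\phi]_0^L$ shows the interior square is nonnegative and only the boundary term can drive the quotient below $-\tau^2/4$, so the trial function must be matched to the free boundary conditions. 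This quantitative control of the lowest eigenvalue as the domain shrinks is exactly the spectral information I would import from \cite{CC17}; once it is in hand, the corollary follows immediately from the $\mu_1<A$ branch of Theorem~\ref{thm:turing}.
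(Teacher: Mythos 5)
Your argument is correct and follows the same top-level route as the paper: both reduce the corollary to the second alternative of the $\tau<0$ case of Theorem~\ref{thm:turing} (conditions \eqref{firstcond}--\eqref{secondcond} plus $\mu_1<A(\vec p)$), and both supply the needed domain by showing $\mu_1^{\ast}(\tau)=-\infty$, which is exactly the content of the paper's Theorem~\ref{thm:muast}. Where you genuinely diverge is in how that spectral fact is established. The paper proves Theorem~\ref{thm:muast} by rescaling to the fixed interval $(-1,1)$ and then importing \cite[Proposition~16]{CC17}, which guarantees that the eigenvalue branches $\mu_j((-1,1),\tilde\tau)$ intersect every downward parabola $y=-c\tilde\tau^2$; it then extends from one dimension to $n$-dimensional cubes by a separate Rayleigh-quotient comparison. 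You instead get the bound directly from a trial function: your low-frequency sine (which for $\sigma=\tau L^2\to0^-$ is essentially the linear function $x_1$ already mentioned after Definition~\ref{def:muast}) gives $\mu_1(\Omega,\sigma)\le c\sigma$ with $c>0$, and the scaling identity \eqref{rescalingrelation} then forces $\mu_1(\Omega_L,\tau)\le c\tau L^{-2}\to-\infty$. This is more elementary and works in all dimensions at once; the trade-off is that the paper's route, via the exact branch structure from \cite{CC17}, yields sharper information (which eigenvalue crosses the threshold, and for which $R$), which is what the later sections on instability regions actually use. Two small corrections to your closing paragraph: since $\mu_1$ is the unconstrained minimum of the Rayleigh quotient over all of $H^2(\Omega)$, trial functions need not be ``matched to the free boundary conditions'' --- the natural conditions \eqref{BC1}--\eqref{BC2} are not imposed on the trial space; and, as a consequence, your Rayleigh-quotient computation already proves the unboundedness on its own, so nothing actually needs to be ``imported from \cite{CC17}'' in your version of the argument. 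Your emphasis that mere negativity of $\mu_1$ is not enough and that the genuine content is $\mu_1\to-\infty$ past the arbitrary threshold $A(\vec p)$ is exactly the right point.
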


Corollary~\ref{cor:turing} depends on a certain fact about the minimum of the lowest eigenvalue of $\Delta^2-\tau\Delta$, which we state below as Theorem~\ref{thm:muast}.

Write $D^2u$ for the Hessian matrix of $u$, and $|D^2u|^2=\sum_{i, j} u^2_{x_i, x_j}$.
\begin{definition}\label{def:muast}
Define $\mu_1 =$ lowest eigenvalue of $(\Delta^2 - \tau\Delta) u = \mu u$ with natural (free) boundary conditions \eqref{BC1}--\eqref{BC2}. That is (see \cite[Section 2]{C11}),
\begin{align}\label{mu1}
\mu_1(\Omega, \tau) = \min_{u\in H^2(\Omega)}\frac{\int_{\Omega}\big(|D^2u|^2 + \tau |\nabla u|^2\big)\, dx}{\int_{\Omega} u^2\, dx}.
\end{align} Denote the ``smallest possible" first eigenvalue by
\begin{align*}
\mu_1^{\ast}(\tau)  = \inf_{\Omega}\mu_1(\Omega, \tau).
\end{align*} The ratio on the right of \eqref{mu1} is called the Rayleigh quotient. It is obtained formally by multiplying the eigenvalue equation \eqref{DE} by $u$ and integrating by parts, using the natural boundary conditions \eqref{BC1}--\eqref{BC2}.
\end{definition}
Observe that
$\mu_1^{\ast}(\tau) < 0$ when $\tau < 0$, by choosing a linear trial function. We show $\mu_1^{\ast}(\tau)=-\infty$, which is the key to proving Corollary~\ref{cor:turing}.

\begin{theorem} \label{thm:muast}
If $\tau<0$ then the first eigenvalue can be arbitrarily negative:
\begin{equation*}
\mu_1^{\ast}(\tau) = -\infty.
\end{equation*}
\end{theorem}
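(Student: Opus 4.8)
The plan is to bound $\mu_1(\Omega,\tau)$ from above by a single, explicitly chosen trial function in the Rayleigh quotient \eqref{mu1}, and then to let the domain degenerate through a one-parameter family so that this upper bound tends to $-\infty$. Since $\mu_1^\ast(\tau)=\inf_\Omega \mu_1(\Omega,\tau)$, exhibiting domains on which $\mu_1$ is arbitrarily negative is all that is required. The guiding observation is that the numerator of \eqref{mu1} pits the nonnegative fourth-order term $\int_\Omega |D^2u|^2$ against the negative (since $\tau<0$) second-order term $\tau\int_\Omega|\nabla u|^2$, and the fourth-order term is the only source of a lower bound. Any oscillatory choice such as $u=\sin(\omega x)$ is useless here, because its Hessian grows like $\omega^2$ times its gradient, so the fourth-order term dominates and the quotient stays bounded below (by roughly $-\tau^2/4$).

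The way to defeat the stabilizing term is to make it vanish identically. I would therefore take $u$ to be a nonconstant \emph{affine} function, say $u(x)=x_1$. Then $D^2u\equiv 0$, so the numerator of \eqref{mu1} collapses to $\tau\int_\Omega|\nabla u|^2=\tau|\Omega|<0$, and the Rayleigh quotient reduces to the elementary ratio $\tau|\Omega|\big/\int_\Omega x_1^2\,dx$. The essential point making this legitimate is that the minimization in \eqref{mu1} runs over all of $H^2(\Omega)$ with \emph{no} imposed boundary conditions, the natural conditions \eqref{BC1}--\eqref{BC2} arising only variationally; hence affine functions are admissible competitors and there is no clamping to forbid this choice.

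It then remains to choose domains for which $|\Omega|\big/\int_\Omega x_1^2\,dx$ is arbitrarily large. Taking $\Omega=B_\delta$, the ball of radius $\delta$ centered at the origin, the moment computation $\int_{B_\delta}x_1^2\,dx=|B_\delta|\,\delta^2/(n+2)$ gives at once $\mu_1(B_\delta,\tau)\le (n+2)\tau/\delta^2$, and letting $\delta\to 0$ drives the right-hand side to $-\infty$, whence $\mu_1^\ast(\tau)=-\infty$. Equivalently one may fix a reference domain and rescale it by $\lambda\to 0$: under $x\mapsto\lambda x$ the fourth- and second-order terms scale like $\lambda^{-4}$ and $\lambda^{-2}$ respectively, so for an affine trial function only the negative $\lambda^{-2}$ contribution survives and blows up. I do not anticipate a serious analytic obstacle, since the content lies entirely in the choice of trial function rather than in any delicate estimate; the only point deserving care is the justification just noted, namely that eliminating the coercive fourth-order term by an affine function is permitted \emph{precisely because} we work with the free (natural) boundary conditions, i.e.\ unconstrained minimization over $H^2(\Omega)$. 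Under clamped or hinged conditions an affine function would be inadmissible and this mechanism would fail, which is exactly why the phenomenon is special to the free problem.
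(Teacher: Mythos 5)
Your proof is correct, but it takes a genuinely different and more elementary route than the paper. The paper works on the interval $(-R,R)$, invokes the rescaling relation $\mu_j((-R,R),\tau)=R^{-4}\mu_j((-1,1),\tau R^2)$, and then quotes a nontrivial fact from the spectral analysis of the free rod (\cite[Proposition 16]{CC17}) guaranteeing that the eigenvalue branches intersect every downward parabola $y=-c\tilde\tau^2$; letting $c\to\infty$ forces $\mu_1^\ast(\tau)=-\infty$, and the result is then lifted to $n$-dimensional cubes by using trial functions of one variable. You instead insert the affine competitor $u=x_1$ directly into the Rayleigh quotient \eqref{mu1} — admissible precisely because the minimization is unconstrained over $H^2(\Omega)$ — so that $|D^2u|\equiv 0$ kills the coercive term and the quotient reduces to $\tau\lvert\Omega\rvert\big/\int_\Omega x_1^2\,dx$, which your moment computation on $B_\delta$ shows is at most $(n+2)\tau/\delta^2\to-\infty$ as $\delta\to0$. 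The two arguments capture the same underlying mechanism (the paper's intersection with steep parabolas occurs near $\tilde\tau=0^-$, where the lowest branch is asymptotically linear in $\tilde\tau$ — the spectral shadow of your affine trial function — and its choice of $R$ likewise tends to $0$), but yours is self-contained, works in all dimensions at once without the cube extension step, and avoids any reliance on \cite{CC17}; what the paper's route buys is quantitative information about \emph{which} $(R,\tau)$ achieve a given negative level, which it reuses when plotting the instability regions. Your closing observation that the argument fails under clamped or hinged conditions is also apt and correctly isolates why the phenomenon is special to the free problem.
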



\section{\bf Instability regions of the fourth order diffusion operator $\Delta^2-\tau\Delta$ in one dimension}\label{sec:turingRtau}

In this section, we consider a different cross-section of Turing space: we will look at which combinations of the size of domain and the tension parameter $\tau$ produce Turing instability when the reaction-diffusion parameters are fixed. 

In this section we restrict attention to one dimension, since earlier work \cite{CC17} gives detailed information on the spectrum of the diffusion operator $\Delta^2-\tau\Delta$ in one dimension. The domain is the interval
\[
\Omega(R)=(-R, R).
\] We introduce the Turing spaces with fixed $\vec{p}$:
\begin{definition}[Turing space with fixed parameter]\label{def:crosssectionalTuring}
\begin{align*}
TS(\vec{p})=\{(R, \tau)\in\RR^2: \vec{p}\in TS\left(\Omega(R), \tau\right), R>0\}.
\end{align*}
\end{definition}
This definition produces a region in $(R, \tau)$-plane and our goal is to determine the shape of this region (see Fig.~\ref{fig:three_instability_regions}) and to understand some of its properties. We have seen in \cite{CC17} that the spectrum of the operator $\Delta^2-\tau\Delta$ can be split into eigenvalue branches $\mu_{l}^{\text{odd}}(\tau)$ and $\mu_{l}^{\text{even}}(\tau)$ depending on $\tau$ and an index $l\geq 0$ and also depending on the evenness and oddness of the underlying eigenfunction. See details in \cite{CC17} and Fig.~\ref{fig:freeeigenvalues}. For each corresponding eigenvalue branch we will define two regions in $(R, \tau)$-plane and then we will prove in Theorem~\ref{thm:instabilityregion} that those regions are the instability regions. That is, pairs $(R, \tau)$ in these regions are the length and tension parameters which give points in the Turing space, for the fixed reaction-diffusion parameter vector $\vec{p}$.

\begin{definition}[Instability region]\label{def:instabilityregion}
Fix a reaction-diffusion parameter vector $\vec{p}$ that satisfies condition \eqref{firstcond}, and recall the number $A=A(\vec{p})$ from \eqref{A}, noting $A<0$ by \eqref{firstcond}. Define regions
\begin{align*}
E_{-}(l) &= \{\left(R, \tau\right): R^{-4}\mu_{l}^{even}(\tau R^2)< A\ \text{and}\   \tau<0\},\\
O_{-}(l) &= \{\left(R, \tau\right): R^{-4}\mu_{l}^{odd}(\tau R^2)< A\ \text{and}\   \tau<0\},
\end{align*} for $l\geq 0$. If in addition $\vec{p}$ satisfies condition \eqref{thirdcond} then the numbers $a=a(\vec{p})$ and $b=b(\vec{p})$ in \eqref{mupm} make sense, and we define
\begin{align*}
E_{+}(l) &= \{\left(R, \tau\right):  R^{-4}\mu_{l}^{\text{even}}(\tau R^2)\in(a, b)\},\\
O_{+}(l) &= \{\left(R, \tau\right):  R^{-4}\mu_{l}^{\text{odd}}(\tau R^2)\in(a, b)\}.
\end{align*} (The $``+"$ and $``-"$ notation refers to the sign of the unstable eigenvalues in the proof of Theorem~\ref{thm:instabilityregion} below.) Let $E=E_{+}\cup E_{-}$ and $O=O_{+}\cup O_{-}$, for each $l$. 
\end{definition}

Fig.~\ref{instability_region} shows the instability regions $E(0)$ and $O(0)$ associated to the zero-th even and odd eigenvalue branches, respectively. These figures were formed using implicit parameterizations of the eigenvalue branches $\mu_0^{\text{even}}$ and $\mu_0^{\text{odd}}$, respectively, as described at the end of the section.

\begin{figure}
\includegraphics[scale=0.5]{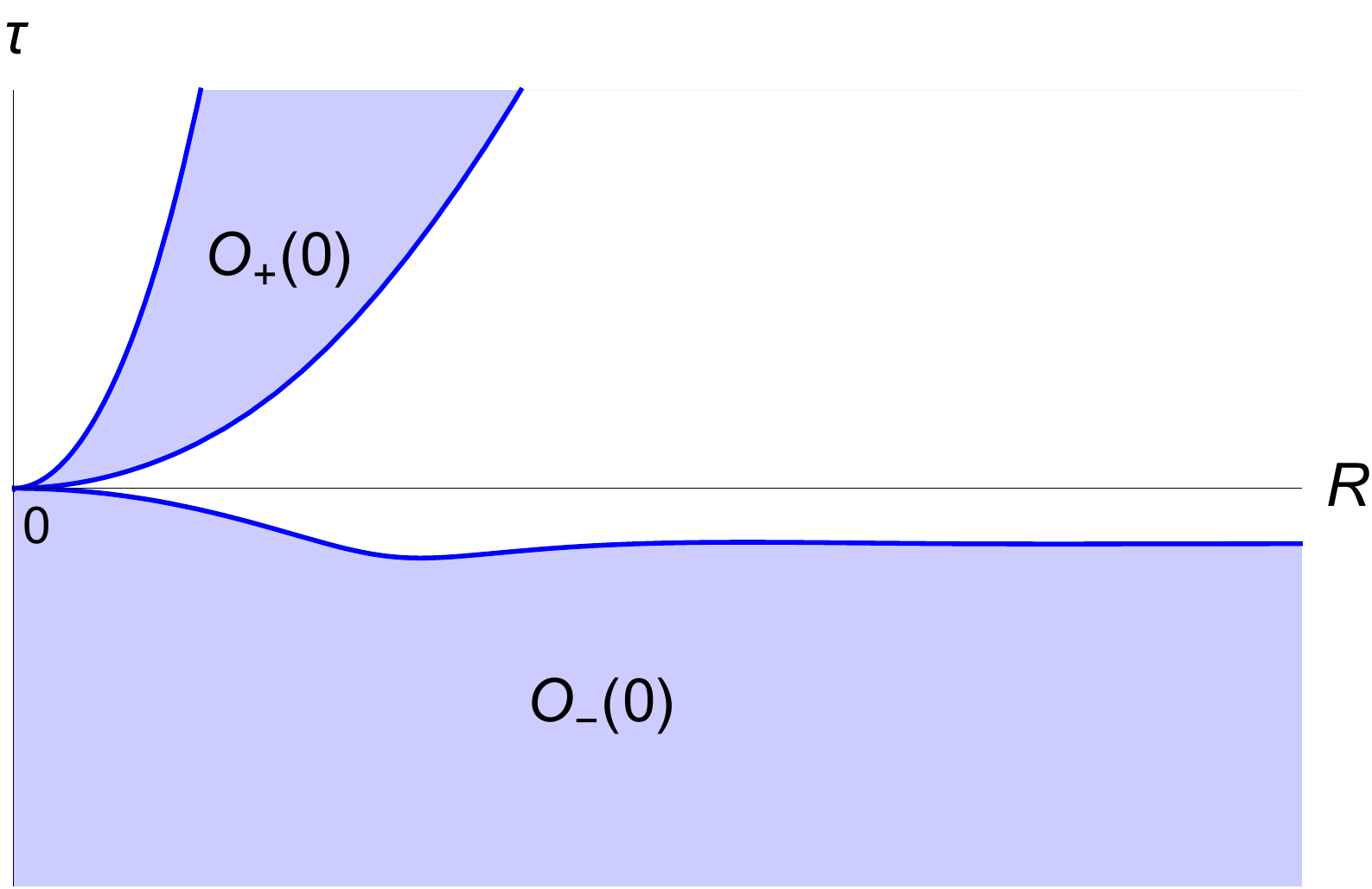}
\quad
\includegraphics[scale=0.5]{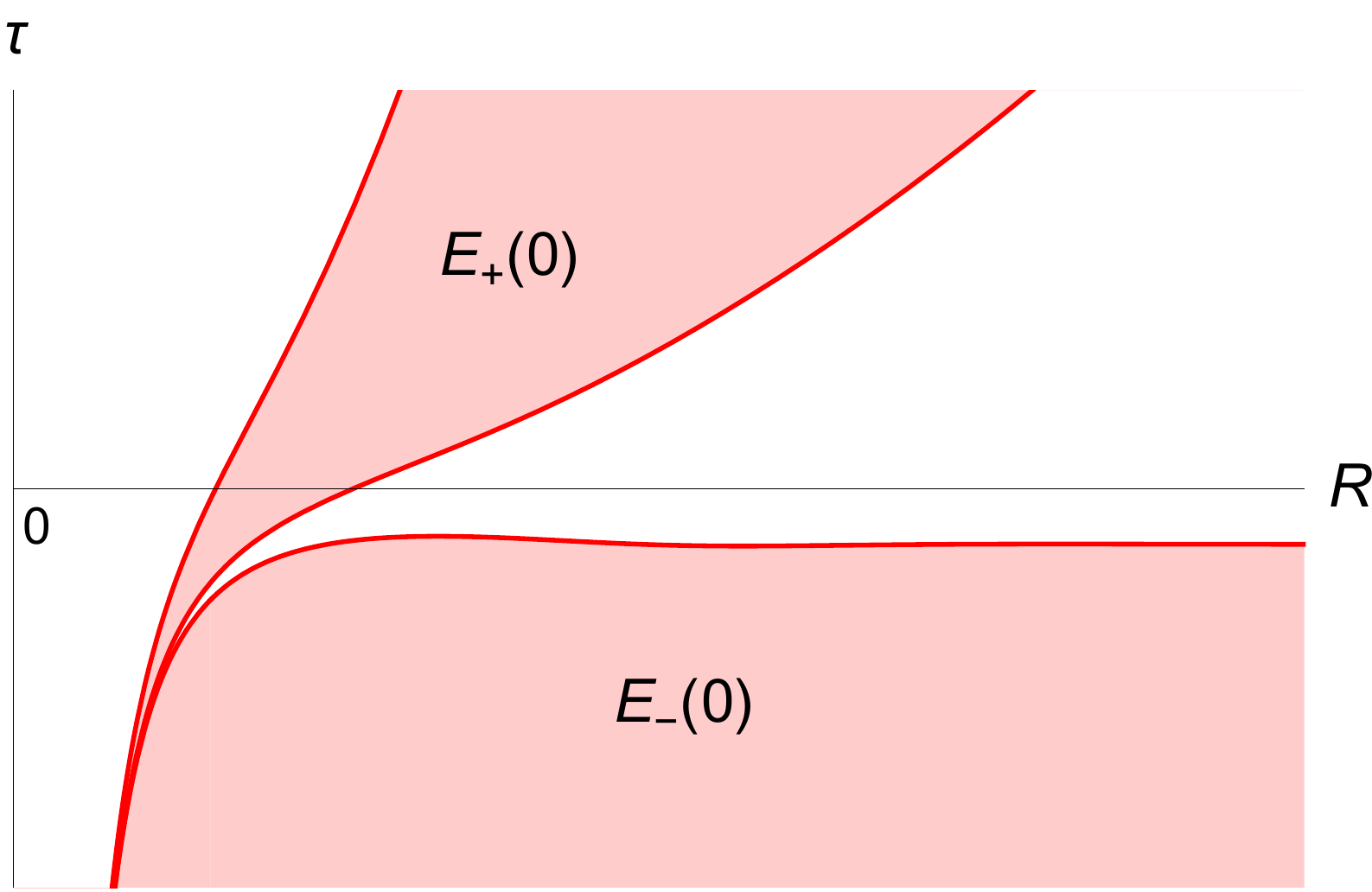}
\caption{\label{instability_region}Points ($R, \tau$) in shaded regions mean that for the interval $\Omega(R)=(-R, R)$, the stable homogeneous steady state of the reaction system becomes unstable in the presence of diffusion. Here we assume the reaction-diffusion vector $\vec{p}$ satisfies conditions \eqref{firstcond}--\eqref{fourthcond}. (The figure uses the Gierer--Meinhardt system \cite[Section 2.2]{M022} and parameter values $\vec{p}=\param$.) The first (resp.\ second) figure describes the instability region associated to the zero-th odd (resp.\ even) eigenvalue branch of $\Delta^2u - \tau\Delta u=\mu u$, as explained immediately after Definition~\ref{def:instabilityregion}. See also Fig.~\ref{fig:three_instability_regions}.}
\end{figure}

In the next theorem, we will show that the instability regions we have found make up the whole Turing space $TS(\vec{p})$. 
Remember that $E_{+}$ intersects both first and fourth quadrants in the $(R, \tau)$-plane, while $E_{-}$ lies in the lower (fourth) quadrant, and similarly for $O_{+}$ and $O_{-}$. Recall $E=E_{+}\cup E_{-}$ and $O=O_{+}\cup O_{-}$.

\begin{theorem}[Instability region associated to each eigenvalue branch]\label{thm:instabilityregion}\

\begin{enumerate}
\item \label{part1}If the reaction-diffusion vector $\vec{p}$ satisfies \eqref{firstcond}--\eqref{fourthcond}, then the regions $E$ and $O$ generate Turing instability and fill the Turing space $TS(\vec{p})$:
\begin{equation*}
TS(\vec{p}) = \bigcup_{l\geq 0} \big(E(l) \cup O(l)\big).
\end{equation*}
\item If the reaction-diffusion vector $\vec{p}$ satisfies \eqref{firstcond}--\eqref{secondcond}, then the regions $E_{-}$ and $O_{-}$ generate Turing instability:
\begin{equation*}
E_{-}(l)\cup O_{-}(l) \subset TS(\vec{p}), \quad l\geq 0.
\end{equation*}
\end{enumerate}
\end{theorem}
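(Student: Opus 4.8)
The plan is to reduce everything to the spectral characterization already proved in Theorem~\ref{thm:turing}, which describes $TS(\Omega,\tau)$ purely in terms of where $\operatorname{Spec}(\Omega,\tau)$ sits relative to the numbers $A$, $a$, $b$. The one extra ingredient needed to pass from a fixed domain to the family $\Omega(R)=(-R,R)$ is a scaling relation for the eigenvalues. First I would substitute $x=Ry$ in the eigenvalue equation \eqref{DE} on $(-R,R)$: since $\tfrac{d^4}{dx^4}=R^{-4}\tfrac{d^4}{dy^4}$ and $\tfrac{d^2}{dx^2}=R^{-2}\tfrac{d^2}{dy^2}$, the equation becomes $u_{yyyy}-(\tau R^2)u_{yy}=(\mu R^4)u$ on the reference interval $(-1,1)$, and the natural boundary conditions \eqref{BC1}--\eqref{BC2} are preserved since they are homogeneous in the derivatives. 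Hence an eigenvalue $\mu$ on $(-R,R)$ with tension $\tau$ equals $R^{-4}$ times the corresponding eigenvalue on $(-1,1)$ with tension $\tau R^2$, and the change of variables commutes with reflection, so it preserves the even/odd structure. Writing $\mu_l^{\text{even}}(\cdot),\mu_l^{\text{odd}}(\cdot)$ for the branches on $(-1,1)$ from \cite{CC17}, this gives
\[
\operatorname{Spec}(\Omega(R),\tau)=\{R^{-4}\mu_l^{\text{even}}(\tau R^2):l\geq0\}\cup\{R^{-4}\mu_l^{\text{odd}}(\tau R^2):l\geq0\}.
\]

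With this identity the two families of instability regions match the two spectral conditions of Theorem~\ref{thm:turing} term by term. By the definition of $E_+(l),O_+(l)$ and the scaling relation, a point lies in some $E_+(l)$ or $O_+(l)$ exactly when some branch of $\Omega(R)$ lands in the open interval $(a,b)$, so
\[
\bigcup_{l\geq0}\big(E_+(l)\cup O_+(l)\big)=\{(R,\tau):\operatorname{Spec}(\Omega(R),\tau)\cap(a,b)\neq\emptyset\}.
\]
Likewise, because $\mu_1$ is the minimum of the spectrum, $\mu_1(\Omega(R),\tau)<A$ holds iff some branch falls below $A$; combined with the sign restriction $\tau<0$ built into the ``$-$'' regions, this yields
\[
\bigcup_{l\geq0}\big(E_-(l)\cup O_-(l)\big)=\{(R,\tau):\tau<0\ \text{and}\ \mu_1(\Omega(R),\tau)<A\}.
\]
Before assembling these I would record the elementary sign facts from Vieta's formulas: under \eqref{firstcond}--\eqref{fourthcond} one has $ab=(f_ug_v-f_vg_u)/k>0$ and $a+b=(kf_u+g_v)/k>0$, so $0<a<b$ and $(a,b)\subset(0,\infty)$; this is why the ``$+$'' regions can meet the upper half-plane while the ``$-$'' regions (capturing $\mu_1<A<0$) are confined to $\tau<0$.

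For part (1) I then read off Theorem~\ref{thm:turing} with $\Omega=\Omega(R)$. For $\tau\geq0$ the theorem says $\vec p\in TS(\Omega(R),\tau)$ iff $\operatorname{Spec}\cap(a,b)\neq\emptyset$, which is exactly membership in $\bigcup_l(E_+(l)\cup O_+(l))$; here $\mu_1=0>A$, so no ``$-$'' region appears, consistent with $E_-,O_-$ excluding $\tau\geq0$. For $\tau<0$, since \eqref{firstcond}--\eqref{fourthcond} imply the hypotheses of both alternatives in Theorem~\ref{thm:turing}, we get $\vec p\in TS(\Omega(R),\tau)$ iff $\operatorname{Spec}\cap(a,b)\neq\emptyset$ or $\mu_1<A$, i.e.\ membership in $\bigcup_l(E(l)\cup O(l))$. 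Taking the union over both signs of $\tau$ and using that Theorem~\ref{thm:turing} is an iff statement gives the equality $TS(\vec p)=\bigcup_{l\geq0}(E(l)\cup O(l))$ in both directions. For part (2), assuming only \eqref{firstcond}--\eqref{secondcond}, any $(R,\tau)\in E_-(l)\cup O_-(l)$ has $\tau<0$ and some branch below $A$, hence $\mu_1<A$; the second alternative of the $\tau<0$ case of Theorem~\ref{thm:turing} (which needs only \eqref{firstcond}--\eqref{secondcond}) then places $\vec p\in TS(\Omega(R),\tau)$, giving the inclusion.

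The substantive content is really the scaling lemma together with the bookkeeping matching the open intervals and sign structure of the regions to the strict inequalities in Theorem~\ref{thm:turing}. I expect the only step requiring genuine care to be verifying that the natural boundary conditions \eqref{BC1}--\eqref{BC2} and the even/odd decomposition of eigenfunctions from \cite{CC17} are truly invariant under $x=Ry$, so that the branch functions $\mu_l^{\text{even}},\mu_l^{\text{odd}}$ transform exactly as claimed; once this is checked, the remaining steps are direct translations and do not require re-running the stability-matrix computation, which is already packaged inside Theorem~\ref{thm:turing}.
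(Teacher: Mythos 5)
Your proposal is correct and follows essentially the same route as the paper: both reduce the statement to Theorem~\ref{thm:turing} applied to $\Omega=\Omega(R)$ via the rescaling relation $\mu_j(\Omega(R),\tau)=R^{-4}\mu_j((-1,1),\tau R^2)$ and then match the spectral conditions $\operatorname{Spec}\cap(a,b)\neq\emptyset$ and $\mu_1<A$ to the ``$+$'' and ``$-$'' regions branch by branch. Your bookkeeping (identifying the unions of regions directly with the two spectral conditions, and noting $0<a<b$ via Vieta) is slightly more systematic than the paper's ``typical case'' presentation, but the substance is identical.
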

In other words, if a pair $(R, \tau)$ belongs to the instability region $E(l)$ or $O(l)$, $l\geq0$, then Turing instability occurs for the domain $\Omega(R)=(-R, R)$ with the tension parameter $\tau$. That is, the spatially homogeneous linearly asymptotically stable  steady state $(u_0, v_0)$ of \eqref{ODE1}--\eqref{ODE2} becomes unstable under diffusion.



We found infinitely many instability regions $E_{\pm}(l)$, $O_{\pm}(l)$ in Theorem~\ref{thm:instabilityregion}. We will discuss how these regions behave as $l$ increases in the following Proposition~\ref{prop:moverightdown}.

\begin{proposition}[Movement of instability regions as index $l$ increases]\label{prop:moverightdown}\
\begin{enumerate}
\item \label{prop:moverightdown_1}Assume \eqref{thirdcond} holds. The regions  $E_{+}(l)$ and $O_{+}(l)$ move downwards as the index $l$ increases, in the sense that the top (resp.\ bottom) boundary curve of region $E_{+}(l)$ lies above the top (resp.\ bottom) boundary curve of region $E_{+}(l+1)$.
\item Assume \eqref{firstcond} holds. The regions $E_{-}(l)$ and $O_{-}(l)$ are nested as $l$ increases:
\begin{align*}
E_{-}(0)\supset E_{-}(1) \supset E_{-}(2)\supset \cdots,\\
O_{-}(0)\supset O_{-}(1) \supset O_{-}(2)\supset \cdots.
\end{align*}
\end{enumerate}
\end{proposition}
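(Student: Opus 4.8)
The plan is to reduce both parts to two monotonicity properties of the eigenvalue branches and then to run an elementary comparison. I would first record the two facts I need, both established for the free rod in \cite{CC17}: \emph{(i)} at each fixed value of the tension the branches are ordered by their index, $\mu_l^{\text{even}}(s)\le\mu_{l+1}^{\text{even}}(s)$ and $\mu_l^{\text{odd}}(s)\le\mu_{l+1}^{\text{odd}}(s)$ for every $s$; and \emph{(ii)} each branch is strictly increasing in its tension argument $s$. Fact \emph{(ii)} follows from the Rayleigh quotient \eqref{mu1} and its min--max analogue for higher eigenvalues: for a fixed trial function the quotient has nonnegative derivative in the tension, equal to $\int|\nabla u|^2/\int u^2$, which is strictly positive because the eigenfunctions carried by these branches are nonconstant. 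Throughout I abbreviate $F_l(R,\tau)=R^{-4}\mu_l^{\text{even}}(\tau R^2)$; since $R>0$, the map $\tau\mapsto\tau R^2$ is increasing and multiplication by $R^{-4}$ preserves order, so $F_l(R,\cdot)$ is increasing in $\tau$ by \emph{(ii)}, while $F_l\le F_{l+1}$ pointwise on $\{R>0\}$ by \emph{(i)}. The odd regions $O_\pm$ are handled verbatim with $\mu_l^{\text{odd}}$ in place of $\mu_l^{\text{even}}$.

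For part (2) the inclusion is immediate: if $F_{l+1}(R,\tau)<A$ then $F_l(R,\tau)\le F_{l+1}(R,\tau)<A$, and intersecting with $\{\tau<0\}$ gives $E_-(l+1)\subset E_-(l)$, and likewise $O_-(l+1)\subset O_-(l)$; iterating in $l$ produces the stated nested chains. This step uses only the order property \emph{(i)}.

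For part (1), fix $R>0$. Since $F_l(R,\cdot)$ is strictly increasing, the slice $\{\tau:(R,\tau)\in E_+(l)\}$ is the single open interval on which $F_l(R,\tau)\in(a,b)$, with lower endpoint solving $F_l(R,\tau)=a$ and upper endpoint solving $F_l(R,\tau)=b$; as $R$ varies these endpoints trace out the bottom and the top boundary curves of $E_+(l)$. Let $\tau_l$ denote the top endpoint, so $F_l(R,\tau_l)=b$, and let $\tau_{l+1}$ solve $F_{l+1}(R,\tau_{l+1})=b$. Then \emph{(i)} gives $F_{l+1}(R,\tau_l)\ge F_l(R,\tau_l)=b=F_{l+1}(R,\tau_{l+1})$, and since $F_{l+1}(R,\cdot)$ is increasing this forces $\tau_l\ge\tau_{l+1}$: the top boundary of $E_+(l)$ lies above that of $E_+(l+1)$. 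Replacing $b$ by $a$ gives the identical comparison for the bottom boundaries, which is exactly the asserted downward movement.

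The one genuinely nontrivial ingredient is the order property \emph{(i)} holding uniformly in $s$, and I expect this to be the main obstacle: it amounts to the same-parity branches $\mu_l^{\text{even}}$ (resp.\ $\mu_l^{\text{odd}}$) never crossing as $s$ ranges over $\RR$. Were two such branches to cross, both the nesting in part (2) and the boundary comparison in part (1) could fail on the portion of the $\tau$-range beyond the crossing. This non-crossing, together with the continuity and strict monotonicity that guarantee each $R$-slice is a single interval with well-defined top and bottom boundary curves, is exactly the one-dimensional spectral information supplied by \cite{CC17}; I would quote it rather than re-derive it, after which only the elementary order comparison above remains.
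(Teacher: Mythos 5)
Your proposal is correct and follows essentially the same route as the paper: both arguments rest on the two spectral facts from \cite{CC17} (same-parity branches ordered by index for every tension value, and strict monotonicity of each branch in the tension), and both deduce the boundary-curve comparison in part (1) by inverting the strictly increasing branch on each $R$-slice. Your part (2) is marginally more direct — a pointwise set inclusion from the branch ordering alone, where the paper instead compares the boundary curves — but this is the same underlying mechanism, not a different proof.
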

Fig.~\ref{fig:three_instability_regions} shows some of these regions, and the nesting behavior.

\begin{remark}
One might think it is obvious that we get Turing instability if $\tau<0$ because of existence of negative eigenvalues for $\Delta^2-\tau\Delta$. However, we show in the next corollary that having negative eigenvalue is not always enough to get instability. We have some region in the $(R, \tau)$-plane with $\tau<0$ which corresponds to homogeneous steady states of the reaction-diffusion system staying stable. This stability relies upon a certain fact about the growth rate of the spectrum of $\Delta^2 - \tau\Delta$ with free boundary conditions when $\tau$ is small negative. 
\end{remark}

\begin{figure}
\includegraphics[scale=0.35]{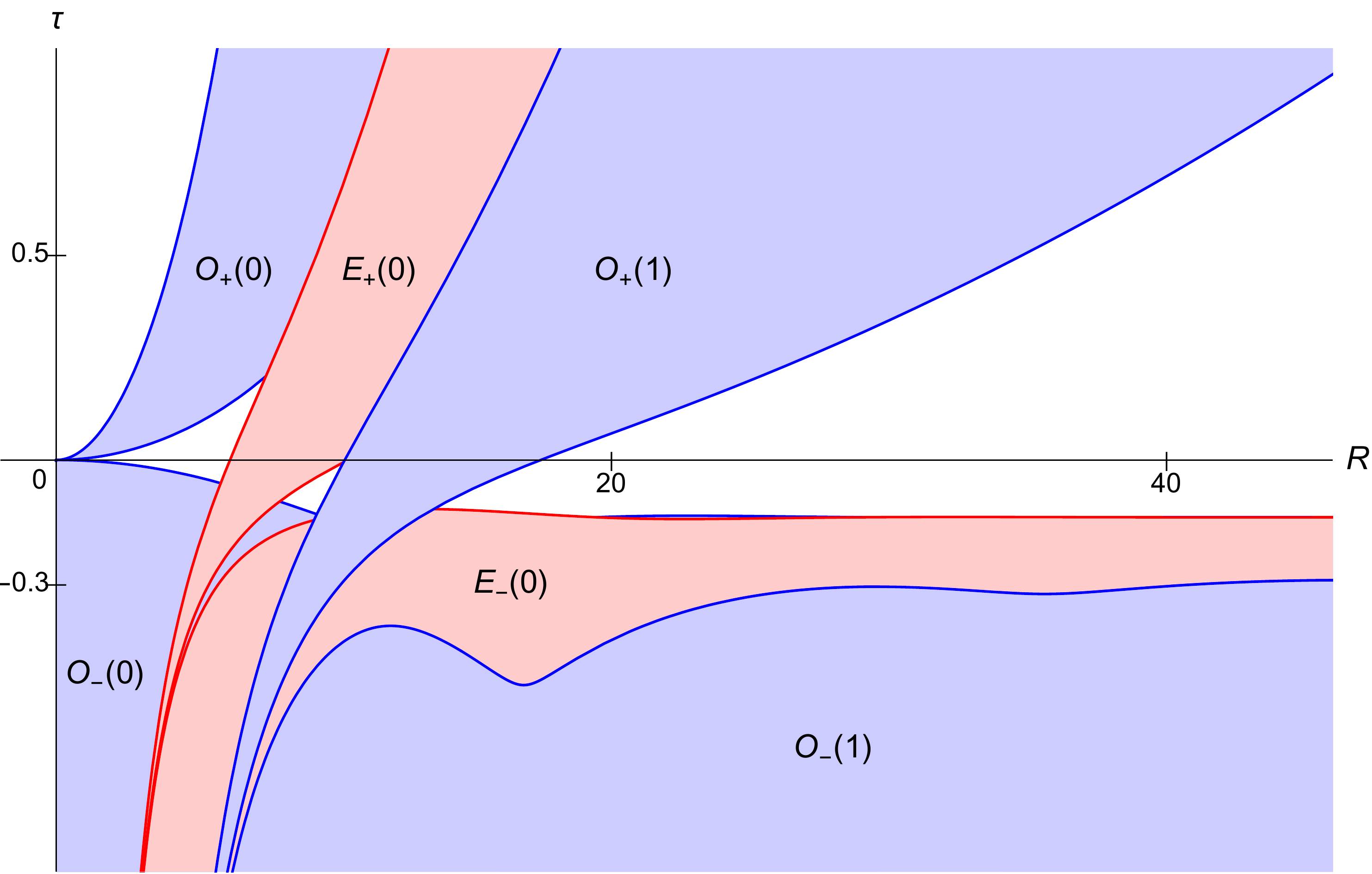}
\caption{\label{fig:three_instability_regions}The instability regions $O_{\pm}(0)$, $E_{\pm}(0)$, and $O_{\pm}(1)$, assuming conditions \eqref{firstcond}--\eqref{fourthcond} hold for the reaction-diffusion vector $\vec{p}$. (The figure uses parameter values $\vec{p}=\param$.) Observe some parts of the lower half plane are not covered by any instability regions.}
\end{figure}

\begin{corollary}[Existence of region outside the Turing space]\label{prop:stableregion}
If the reaction-diffusion vector $\vec{p}$ satisfies \eqref{firstcond}--\eqref{fourthcond}, then there exists some region in $(R, \tau)$-plane where $\tau< 0$ that is outside the union $\displaystyle{\bigcup_{l\geq 0} \big(E(l) \cup O(l)\big) }$ of the instability regions.
\end{corollary}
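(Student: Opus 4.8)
The plan is to exhibit an explicit open window of pairs $(R,\tau)$ with $\tau<0$ that lies outside the Turing space, by balancing the two competing instability mechanisms identified in Theorem~\ref{thm:turing}. First I would reduce the problem to a pair of spectral conditions. By the first part of Theorem~\ref{thm:instabilityregion} the instability regions exhaust the Turing space, $\bigcup_{l\ge0}(E(l)\cup O(l))=TS(\vec p)$, so a pair $(R,\tau)$ with $\tau<0$ lies outside the instability regions precisely when $\vec p\notin TS(\Omega(R),\tau)$. Since \eqref{firstcond}--\eqref{fourthcond} hold, Theorem~\ref{thm:turing} (case $\tau<0$) says $\vec p\in TS(\Omega(R),\tau)$ if and only if $\operatorname{Spec}(\Omega(R),\tau)\cap(a,b)\neq\emptyset$ or $\mu_1(\Omega(R),\tau)<A$. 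Thus I must produce $(R,\tau)$ with $\tau<0$ satisfying both
\[
\operatorname{Spec}(\Omega(R),\tau)\cap(a,b)=\emptyset
\qquad\text{and}\qquad
\mu_1(\Omega(R),\tau)\ge A .
\]
Here $0<a<b$ (indeed $ab=(f_ug_v-f_vg_u)/k>0$ and $a+b=(kf_u+g_v)/k>0$ by \eqref{secondcond} and \eqref{fourthcond}), while $A<0$. Consequently the constant eigenfunction's eigenvalue $0$ and every negative eigenvalue already lie below $a$, so the first condition reduces to requiring that the \emph{smallest positive} eigenvalue exceed $b$, and the second condition concerns only the single most negative eigenvalue. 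The picture to establish is therefore that the most negative eigenvalue should sit in $[A,0)$ while every positive eigenvalue sits above $b$.

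The mechanism that opens such a window is a separation of scales in $R$. Using the rescaling $\mu_j(\Omega(R),\tau)=R^{-4}\mu_j(\Omega(1),\tau R^2)$, I would invoke the spectral analysis of \cite{CC17} to record two facts valid when $\tau$ is small and negative: the most negative eigenvalue arises from the lowest odd branch and decays only like $R^{-2}$, with $\mu_1(\Omega(R),\tau)=R^{-4}\mu_0^{\mathrm{odd}}(\tau R^2)\sim 3\tau/R^2$ as $R\to0^+$ and with overall floor of order $-\tau^2/4$; whereas every positive eigenvalue decays like $R^{-4}$, the smallest being of size $c_0R^{-4}$, where $c_0>0$ is the first positive eigenvalue of $\Delta^2$ with free boundary conditions on $(-1,1)$. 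The slower ($R^{-2}$) decay of the negative part against the faster ($R^{-4}$) decay of the positive part is exactly what creates a gap. Concretely I would set $R_1=\sqrt{3|\tau|/|A|}$, where the negative eigenvalue crosses $A$, and $R_2=(c_0/b)^{1/4}$, where the smallest positive eigenvalue crosses $b$, and observe that $R_1\to0$ as $\tau\to0^-$ while $R_2$ is fixed; hence $R_1<R_2$ whenever $|\tau|$ is small, namely $|\tau|<\tfrac{|A|}{3}\sqrt{c_0/b}$.

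It then remains to verify both conditions on the window $\{(R,\tau):\tau<0,\ |\tau|<\delta,\ R_1<R<R_2\}$. For $R<R_2$ the smallest positive eigenvalue exceeds $b$ by the $R^{-4}$ lower bound, so $\operatorname{Spec}(\Omega(R),\tau)\cap(a,b)=\emptyset$. For $R>R_1$ and $|\tau|$ small, the growth-rate estimate of \cite{CC17} keeps the lowest (odd) eigenvalue at or above its value $A$ at $R=R_1$, giving $\mu_1(\Omega(R),\tau)\ge A$; here smallness of $|\tau|$, say $|\tau|<2\sqrt{|A|}$, also ensures the oscillatory floor $-\tau^2/4$ does not drop below $A$, so no other mode violates the bound. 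Choosing $\delta=\min\{2\sqrt{|A|},\ \tfrac{|A|}{3}\sqrt{c_0/b}\}$ then yields a nonempty open region in the fourth quadrant lying outside every instability region, which proves the claim.

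The main obstacle is precisely the spectral input from \cite{CC17}. I must know that, for small negative $\tau$, the unique sub-quartically decaying eigenvalue is the lowest odd branch, with the sharp rate $\mu_0^{\mathrm{odd}}(s)\sim 3s$ as $s\to0^-$ and floor of order $-\tau^2/4$, and that all remaining eigenvalues decay like $R^{-4}$ with a uniform positive lower bound of the form $c_0R^{-4}$. This quantitative description of the ``growth rate of the spectrum when $\tau$ is small negative'' is what guarantees the separation of scales, and hence the existence of the gap; without it, one could not rule out a positive eigenvalue descending into $(a,b)$ at the same $R$ where the negative eigenvalue is still below $A$.
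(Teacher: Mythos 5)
Your argument is correct and, once unwound, rests on the same mechanism as the paper's proof: the uncovered region sits near the origin of the $(R,\tau)$-plane, where the unique negative eigenvalue belongs to the branch $\mu_0^{\mathrm{odd}}$, whose linear vanishing at $\tau R^2=0$ pushes the boundary of $O_{-}(0)$ down to a negative quadratic in $R$, while all other branches stay clear of the window $(a,b)$ and of the level $A$ because $\mu_0^{\mathrm{even}}$ and $\mu_1^{\mathrm{odd}}$ are bounded away from $0$ at $\tau R^2=0$. The difference is organizational: the paper checks the boundary curves of $O_{+}(0)$, $O_{-}(0)$, $E_{\pm}(0)$, $O_{+}(1)$ one by one and then disposes of all higher indices with Proposition~\ref{prop:moverightdown}, whereas you negate the two spectral conditions of Theorem~\ref{thm:turing} directly on an explicit window $R_1(\tau)<R<R_2$, which treats all branches simultaneously via the ordering of the spectrum; both routes need the same inputs from \cite{CC17} (monotonicity of the branches, $\mu_0^{\mathrm{odd}}(0)=0$ with nonzero slope, and strict positivity of the first genuinely positive eigenvalue at $\tau R^2=0$). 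One side remark of yours is false and should be deleted: there is no ``overall floor of order $-\tau^2/4$'' for $\mu_1(\Omega(R),\tau)$ over all $R$ --- Theorem~\ref{thm:muast} asserts precisely that $\inf_{R}\mu_1((-R,R),\tau)=-\infty$ when $\tau<0$, and the free boundary conditions are what defeat the whole-line symbol bound you have in mind. Fortunately you never need that floor: inside your window $|\tau R^2|$ is small, so only the one linearly small negative eigenvalue exists and the linear estimate alone yields $\mu_1\ge A$, after adjusting your constant $3$ to whatever slope \cite{CC17} actually provides (the paper quotes $\pi^2/4$; only the linear order matters).
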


Fig.~\ref{fig:three_instability_regions} shows these regions. The corollary says there is some unshaded part in the lower half plane.

\subsection*{Extra instability regions when $\tau<0$}\label{sec:negative_instability_region}
In this subsection, we describe some additional instability regions when $\tau<0$.
From Theorem~\ref{thm:turing}, there are two cases for the reaction-diffusion vector $\vec{p}$ belonging to the Turing space when $\tau<0$:
\begin{align}
\text{either   } &\text{\eqref{firstcond}--\eqref{thirdcond} hold and $\operatorname{Spec}(\Omega, \tau) \cap\left(a(\vec{p}), b(\vec{p})\right)\neq \emptyset$,}\label{firstcase}\\
\text{or        } &\text{\eqref{firstcond}--\eqref{secondcond} hold and $\mu_1< A(\vec{p})$}\label{secondcase}.
\end{align} The instability regions $E_{-}$ and $O_{-}$ arise from the case \eqref{secondcase}, as shown in Definition~\ref{def:instabilityregion} and Theorem~\ref{thm:instabilityregion}. In addition to these regions shown in Fig.~\ref{fig:three_instability_regions}, in this subsection we will describe what the instability regions arising from the case \eqref{firstcase} look like. 

In the traditional Turing analysis with the Laplacian, the Turing space would be empty if \eqref{fourthcond} fails (that is, if $kf_u+g_v<0$), because $a(\vec{p})$ and $b(\vec{p})$ are negative while the spectrum of the Laplacian is positive. But $\Delta^2-\tau\Delta$ permits negative eigenvalues when $\tau<0$. This introduces extra instability regions (as shown in Fig.~\ref{fig:instability_regions_negative15}), i.e., creates some Turing space.

Assume the reaction-diffusion vector $\vec{p}$ satisfies \eqref{firstcond}--\eqref{thirdcond} and $kf_u+g_v<0$ (meaning \eqref{fourthcond} fails). Define 
\begin{align*}
\widetilde{E}(l) &= \{\left(R, \tau\right):  R^{-4}\mu_{l}^{\text{even}}(\tau R^2)\in(a, b)\ \text{and}\ \tau<0\},\\
\widetilde{O}(l) &= \{\left(R, \tau\right):  R^{-4}\mu_{l}^{\text{odd}}(\tau R^2)\in(a, b)\ \text{and}\ \tau<0\}.
\end{align*} Note that \eqref{thirdcond} guarantees the numbers $a=a(\vec{p})$ and $b=b(\vec{p})$ in \eqref{mupm} make sense, and these numbers are negative because $kf_u+g_v<0$. We present the regions $\widetilde{E}(l)$ and $\widetilde{O}(l)$ numerically in Fig.~\ref{fig:instability_regions_negative15}. The regions are obtained in a similar way to regions $E_{\pm}$ and $O_{\pm}$. We use an implicit parameterization for $\mu_l^{\text{even}}$ and $\mu_l^{\text{odd}}$ in terms of two other parameters \cite[Theorem 11 and Lemma 13]{CC17}, but now we only need to consider eigenvalue branches in the lower half of the spectral plane.

Unlike the instability regions $E_{-}(l)$ and $O_{-}(l)$ in Definition~\ref{def:instabilityregion} that only have upper boundary curves, the regions $\widetilde{E}(l)$ and $\widetilde{O}(l)$ have upper and lower boundary curves. To sum up, negative $\tau$ values introduce negative eigenvalues for the diffusion operator $\Delta^2-\tau\Delta$ which lead to the appearance of some instability regions. These are relatively smaller than the regions $E_{-}(l)$ and $O_{-}(l)$ in Definition~\ref{def:instabilityregion}.


\begin{figure}
\includegraphics[scale=0.5]{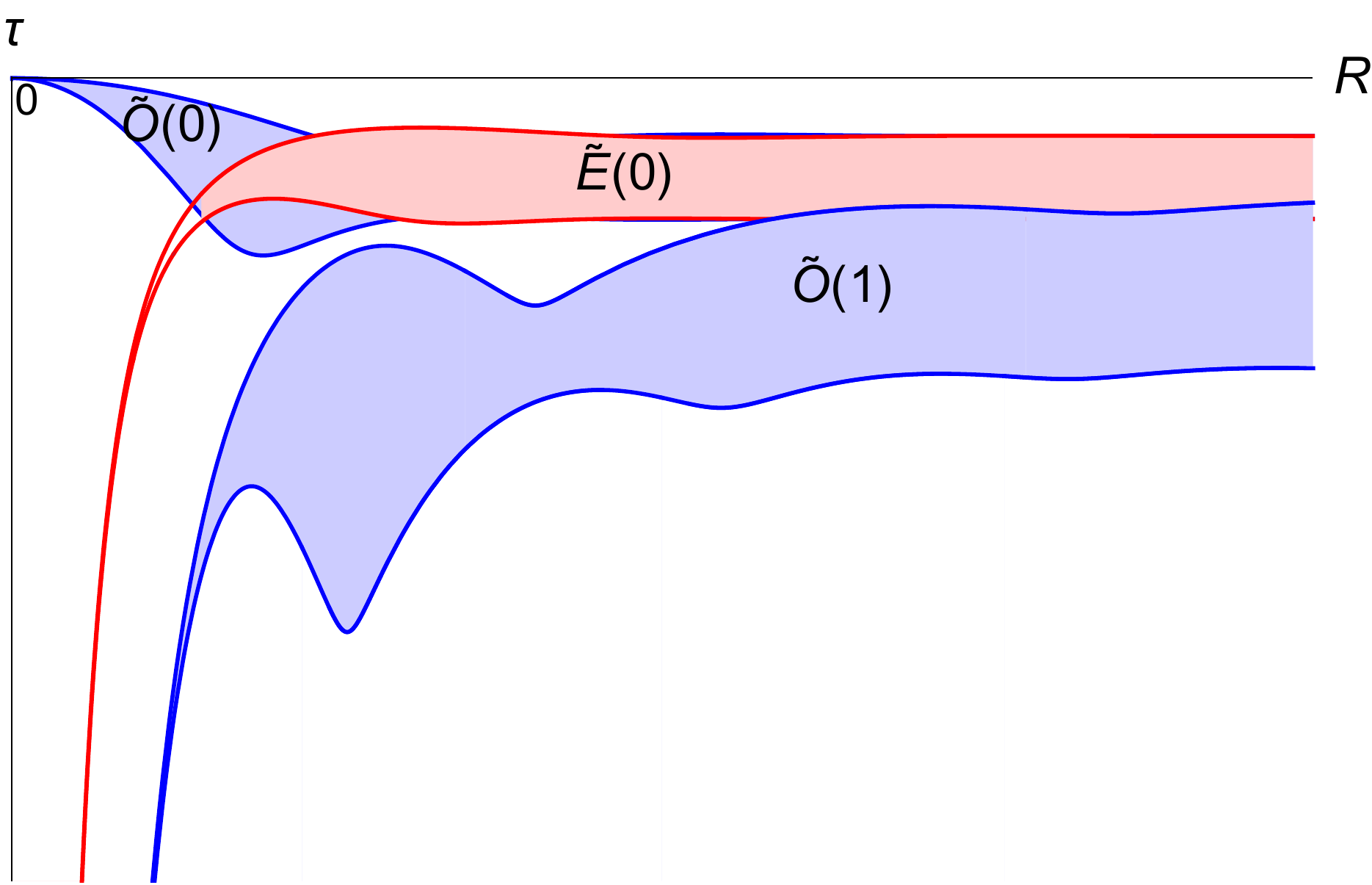}
\caption{\label{fig:instability_regions_negative15}New phenomenon: the negative eigenvalues of $\Delta^2u-\tau\Delta u=\mu u$ permit Turing instability even when $kf_u+g_v<0$ (meaning \eqref{fourthcond} fails), whereas in the traditional Turing analysis with the Laplacian governing diffusion, the Turing space would be empty. In the pictured situation, conditions \eqref{firstcond}--\eqref{thirdcond} hold for the reaction-diffusion vector $\vec{p}$ and condition \eqref{fourthcond} fails. (The figure uses parameter values $\vec{p}=(0.1, -0.01, 20, -1, 1)$.) The regions of $\widetilde{O}(0)$, $\widetilde{E}(0)$, and $\widetilde{O}(1)$ are associated to the eigenvalue branches $\mu_0^{\text{odd}}$, $\mu_0^{\text{even}}$, and $\mu_1^{\text{odd}}$, respectively, in the lower half of the spectral plane (see earlier explanation in the subsection).}
\end{figure}

\subsection*{Numerical experiments.} We do some numerical simulations, to see beyond the linear predictions from spectral theory to what is happening in the genuinely nonlinear regime. We modify Gierer and Meinhardt's reaction kinetics \cite[Equation $(15)$]{GM72}, \cite[Section $2.2$]{M022}, to use the fourth order diffusion $\Delta^2-\tau \Delta$ on the interval $\Omega(R)=(-R, R)$ in $1$-dimension. The Gierer--Meinhardt reaction system is
\begin{equation}\label{G-Mreaction}
f(u, v)=k_1-k_2 u+\frac{k_3u^2}{v}, \quad g(u, v)=k_4u^2-k_5v.
\end{equation} We fix constants $(k_1, k_2, k_3, k_4, k_5)=(0, 0.4, 1, 1, 1)$ for our numerical simulations. Hence the homogeneous steady state $(u_0, v_0)$ is
\[
(u_0, v_0)=(2.5, 6.25),
\] and the partial derivatives of $f$ and $g$ evaluated at the steady state $(u_0, v_0)$ are
\[
(f_u, f_v, g_u, g_v)=(0.4, -0.16, 5, -1).
\] We also take the diffusivity $k=30$. Figs.~\ref{fig:patterns_positive} and \ref{fig:patterns_negative} illustrate inhomogeneous steady states corresponding to points in the instability regions $O_{+}(1)$ and $E_{-}(0)$ in Theorem~\ref{thm:instabilityregion} part (\ref{part1}). An unstable steady state for $\tau\geq0$ is illustrated in Fig.~\ref{fig:patterns_positive}, which shows a slightly perturbed constant steady state evolving into a stripe pattern. The initial growth of the pattern takes place in the linear regime. The persistence of the pattern as it grows larger is due to the nonlinear effects (reaction). Fig.~\ref{fig:patterns_negative} illustrates an unstable steady state for $\tau<0$. Again a perturbed steady state evolves into a stripe pattern. However, the experiment only gives patterns like Fig.~\ref{fig:patterns_negative} for about $10-20\%$ of random initial conditions. The rest of the simulations give irregular cycles of blow up, which might be due to numerical instabilities when $\tau<0$.


Moreover, it is possible to get stability of the perturbed steady state when $\tau<0$ even though there is an unstable mode ($\mu<0$) in the linearized equation, as Corollary~\ref{prop:stableregion} shows. In about $80\%$ of our simulations (not shown), the initial perturbation decayed and the solution remained near the steady state until time $t=50$, as predicted qualitatively by Corollary~\ref{prop:stableregion}. In the other $20\%$ of simulations, the solution blew up chaotically, which again we think is due to numerical instabilities.


\begin{figure}
\centering
               \includegraphics[width=11cm, height=7cm]{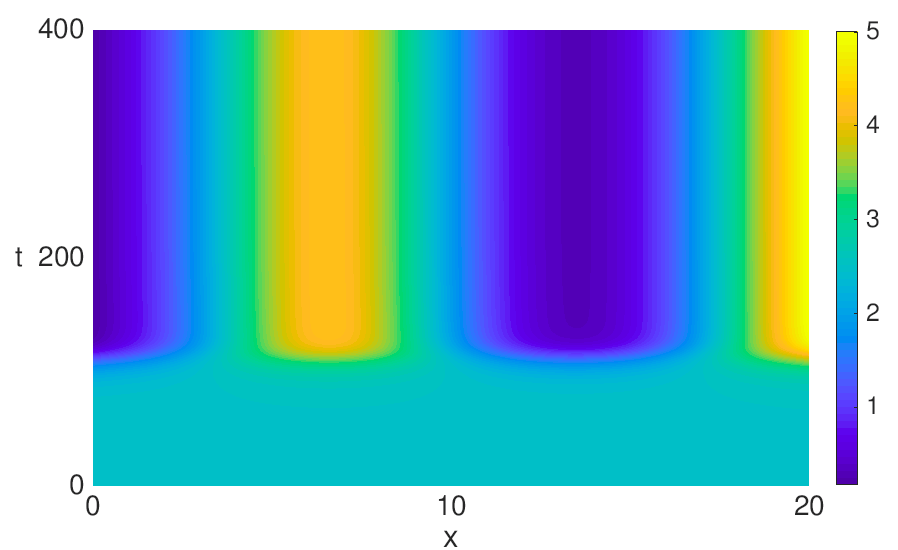}
    \caption{\label{fig:patterns_positive}Regular stripe pattern of modified fourth order Gierer--Meinhardt system \eqref{G-Mreaction} with $(k_1, k_2, k_3, k_4, k_5)=(0, 0.4, 1, 1, 1)$ when $\tau\geq 0$: the figure shows a contour plot of the activator $u$. The pattern corresponds to the point $(R, \tau)=(20, 0.5)$ in the instability region $O_{+}(1)$ (shown in Fig.~\ref{fig:three_instability_regions}) associated to the first odd eigenvalue branch $\mu_1^{\text{odd}}$ in Fig.~\ref{fig:freeeigenvalues}. (The figure uses parameter values $\vec{p}=\param$.)}
\end{figure}

\begin{figure}
\centering
        \includegraphics[width=11cm, height=7cm]{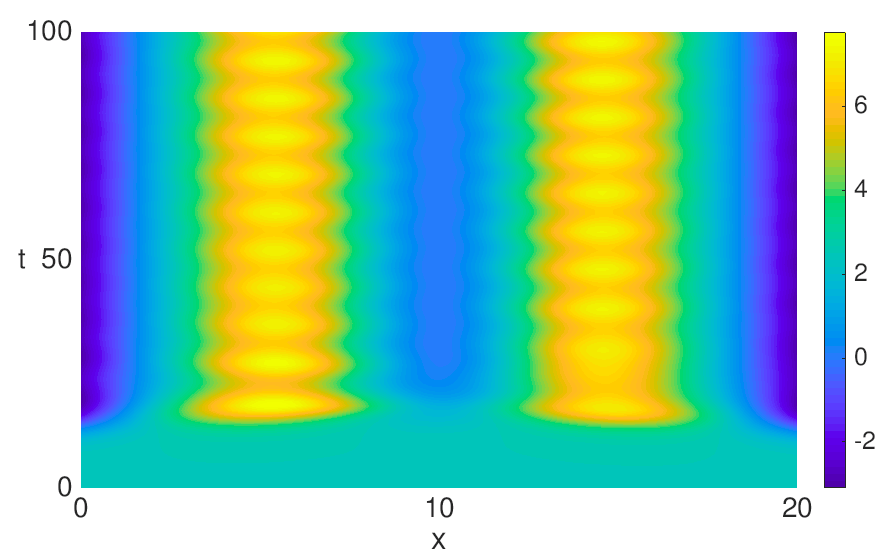}
       \caption{\label{fig:patterns_negative}Regular stripe pattern of modified fourth order Gierer--Meinhardt system \eqref{G-Mreaction} with $(k_1, k_2, k_3, k_4, k_5)=(0, 0.4, 1, 1, 1)$ when $\tau<0$: the figures shows a contour plot of the activator $u$ corresponding to the point $(R, \tau)=(20, -0.3)$ in the instability region $E_{-}(0)$ (shown in Fig.~\ref{fig:three_instability_regions}) associated to the zero-th even eigenvalue branch $\mu_0^{\text{even}}$ in Fig.~\ref{fig:freeeigenvalues}. (The figure uses parameter values $\vec{p}=\param$.) The simulation only gives a stable pattern for about $10-20\%$ of random initial conditions. The rest of the simulations give irregular cycles of blow up. Also, even in the stable pattern shown in the figure, the pattern seems to be slightly temporally periodic.}
\end{figure}

\subsection*{Plotting the instability regions}
We end the section explaining how we create the instability regions in Figs.~\ref{instability_region} and \ref{fig:three_instability_regions}. For instance, the direct formula for the bottom boundary curve of the instability region $O_{+}(l)$ is
\[
\tau=R^{-2}(\mu_l^{\text{odd}})^{-1}\left(a(\vec{p})R^4\right)
\] from Definition~\ref{def:instabilityregion}. However, it is not straightforward to obtain the curve since we do not have have an explicit formula for $\mu_l^{\text{odd}}$ as a function of $\tau$. Instead we have the parameterized curves in terms of two other parameters \cite[Theorem 5]{CC17}. Hence, it is easy to work with a parameterized formula for Figs.~\ref{instability_region} and \ref{fig:three_instability_regions}:
\begin{align*}
R(\alpha)&=\sqrt[4]{\frac{\alpha^2\beta^2}{a(\vec{p})}},\\
\tau(\alpha)&=\frac{\beta^2-\alpha^2}{\alpha\beta}\sqrt{a(\vec{p})},
\end{align*} where $\alpha$ and $\beta$ are related by $\alpha^3\tan(\alpha)=\beta^3 \tanh(\beta), l\pi\leq \alpha < (2l+1)\pi/2$ from \cite[Lemma 2]{CC17}. The point is that 
\begin{align*}
\tau(\alpha)R(\alpha)^2&=\beta^2-\alpha^2,\\
a(\vec{p})R(\alpha)^4&=\alpha^2\beta^2,
\end{align*} and so by the parameterization in \cite[Theorem 5]{CC17} we see that $a(\vec{p})R(\alpha)^4$ equals the $\mu$-value corresponding to the $\tau$-value $\tau(\alpha)R(\alpha)^2$, which means 
\[
a(\vec{p})R(\alpha)^4=\mu_l^{\text{odd}}\left((-1, 1), \tau(\alpha)R(\alpha)^2\right)
\] as we want.

The $``-"$ regions are special since each eigenvalue branch in the lower half of the spectral plane consists of infinitely many different parameterizations \cite[Theorem 11 and Lemma 13]{CC17}, whereas $``+"$ regions are given by eigenvalue branches in the upper half of the spectral plane which consist of a single parameterization \cite[Theorem 5]{CC17}. So the boundary curves of $O_{-}(l)$ and $E_{-}(l)$ are made up with infinitely many parameterizations.


\section{\bf Proof of Theorem~\ref{thm:turing}}\label{sec:pfthm_turing}

\begin{proof}
When $\tau\geq 0$ we get the same conditions for Turing instability as when Laplacian diffusion is used \cite[Section $2.3$]{M022}, namely conditions \eqref{firstcond}--\eqref{fourthcond}. We give this proof below, since the later parts of the proof must be modified when $\tau<0$. 

Conditions \eqref{firstcond} and \eqref{secondcond} come from requiring linear stability of the ODE system in the absence of any spatial variation, as we now explain. Without spatial variation $u$ and $v$ satisfy
\begin{equation*}
u_{t}= f(u, v), \quad v_{t}= g(u, v).
\end{equation*}
First, we linearize the system about the constant steady state $(u_0, v_0)$: set $\vec{w}= \begin{pmatrix}u-u_0\\v-v_0\end{pmatrix}$, so that for small $|\vec{w}|$, 
\begin{align*}
\vec{w}_t =  \left( \begin{array}{ccc}
	f_u & f_v \\
	g_u &  g_v \end{array} \right)\vec{w}
\end{align*} where the derivative matrix is evaluated at $u=u_0, v=v_0$. Look for solution of the form $\vec{w}\propto e^{\lambda t}$. The steady state $\vec{w}=0$ is linearly stable if $\text{Re}\,\lambda<0$ for each eigenvalue $\lambda$ of the derivative matrix. That is, where $\lambda$ satisfies the quadratic equation
\begin{align*}
\text{det}\left[\left(\begin{array}{ccc}
	f_u & f_v \\
	g_u &  g_v \end{array}\right)-\lambda I\right]=\lambda^2-(f_u+g_v)\lambda+^2(f_ug_v-f_vg_u)=0.
\end{align*} Hence linearly stability of the constant steady state for the ODE system is guaranteed if \eqref{firstcond} and \eqref{secondcond} hold:
\begin{align*}
&f_u + g_v < 0,\\
&f_u g_v - f_v g_u > 0.
\end{align*} 
We assume these conditions throughout the rest of the proof.

Conditions \eqref{thirdcond} and \eqref{fourthcond} come from requiring linear instability of the PDEs (including the diffusion term) at the constant steady state, as we now explain. Consider the full reaction-diffusion system \eqref{eqn:activator}--\eqref{eqn:inhibitor} and again linearize about $(u_0, v_0)$ to get
\begin{align}\label{sys:fullPDE}
\vec{w}_t= \left( \begin{array}{ccc}
	1 & 0 \\
	0 &  k \end{array} \right)(-\Delta^2+\tau \Delta)\vec{w}+ \left( \begin{array}{ccc}
	f_u & f_v \\
	g_u &  g_v \end{array} \right)\vec{w}.
\end{align} Define $\phi_{j}(x)$ to be the time-independent solution of the eigenvalue problem:
\begin{align*}
\Delta^2\phi_{j}-\tau\Delta\phi_{j}=\mu_{j}\phi_{j}, 
\end{align*} with the free boundary conditions \eqref{BC1}--\eqref{BC2}, where $\mu_j$ is the eigenvalue. 

We look for a solution $\vec{w}(x, t)$ of \eqref{sys:fullPDE} in the separated form
\begin{align*}
\vec{w}(x, t)=\sum_{j}\left( \begin{array}{ccc}
	c_j \\
	d_j \end{array} \right)e^{\lambda_j t}\phi_{j}(x),
\end{align*} where $c_j$'s and $d_j$'s are constants. Note that the growth rate $\lambda_j$ informs us about the stability of the homogeneous steady state with respect to the perturbation $\phi_j$. If the real part of  $\lambda_j$ is negative for all $j$, then any perturbations will tend to decay exponentially quickly. However, in the case that the real part of $\lambda_j$ is positive for any value of $j$, our expansion suggests that the amplitude of these modes will grow exponentially quickly and so the homogeneous steady state is linearly unstable. Substitution gives us for each $j$,
\begin{align}\label{sys:formula}
\lambda \phi_{j}=-\mu_{j}D\phi_{j}+ M \phi_{j}
\end{align} where $\displaystyle{D=\left( \begin{array}{ccc}
	1 & 0 \\
	0 &  k \end{array} \right)}$ is the diffusivity matrix and  $\displaystyle{M=\left( \begin{array}{ccc}
	f_u & f_v \\
	g_u &  g_v \end{array} \right)}$ is the stability matrix. To get a nontrivial $\phi_j$, formula \eqref{sys:formula} says $\lambda$ must be an eigenvalue of the matrix $-\mu_j D+M$, and so
\begin{equation*}
\text{det}[\lambda I + \mu_{j}D- M]=0.
\end{equation*} Hence we get the eigenvalues $\lambda(\mu_{j})$ as functions of the wavenumber $\mu_{j}$, as the two roots of
\begin{align}
& \lambda^2 + F(\mu_{j})\lambda + H(\mu_{j})=0,\label{chareq}\\ 
 F(\mu_{j})&\overset{\text{def}}=\mu_{j}(1+k)-(f_u+g_v)<0,\notag\\
H(\mu_{j})&\overset{\text{def}}= k\mu_{j}^2-(kf_u+g_v)\mu_{j}+(f_ug_v-f_vg_u). \notag 
\end{align} For the steady state to be unstable to spatial perturbation, we require
\begin{align*}
\text{Re}\,[\lambda(\mu_{j})]>0 \qquad \text{for some $j\neq 0$}.
\end{align*} Recall that a quadratic equation with real coefficients has a root with positive real part if and only if either the sum of the roots is positive or the product of the roots is negative. Applied to the quadratic \eqref{chareq}, that means we want $F(\mu_j)<0$ or $H(\mu_{j})<0$, for some $j\geq1$. See Fig.~\ref{fig:unstable_band}.

\renewcommand{\thefigure}{\arabic{figure}}
\renewcommand{\thesubfigure}{(\roman{subfigure})}

\begin{figure}%
\centering
\subfigure[][]{%
\label{fig:unstable_band_1}%
        \includegraphics[width=7.5cm, height=5cm]{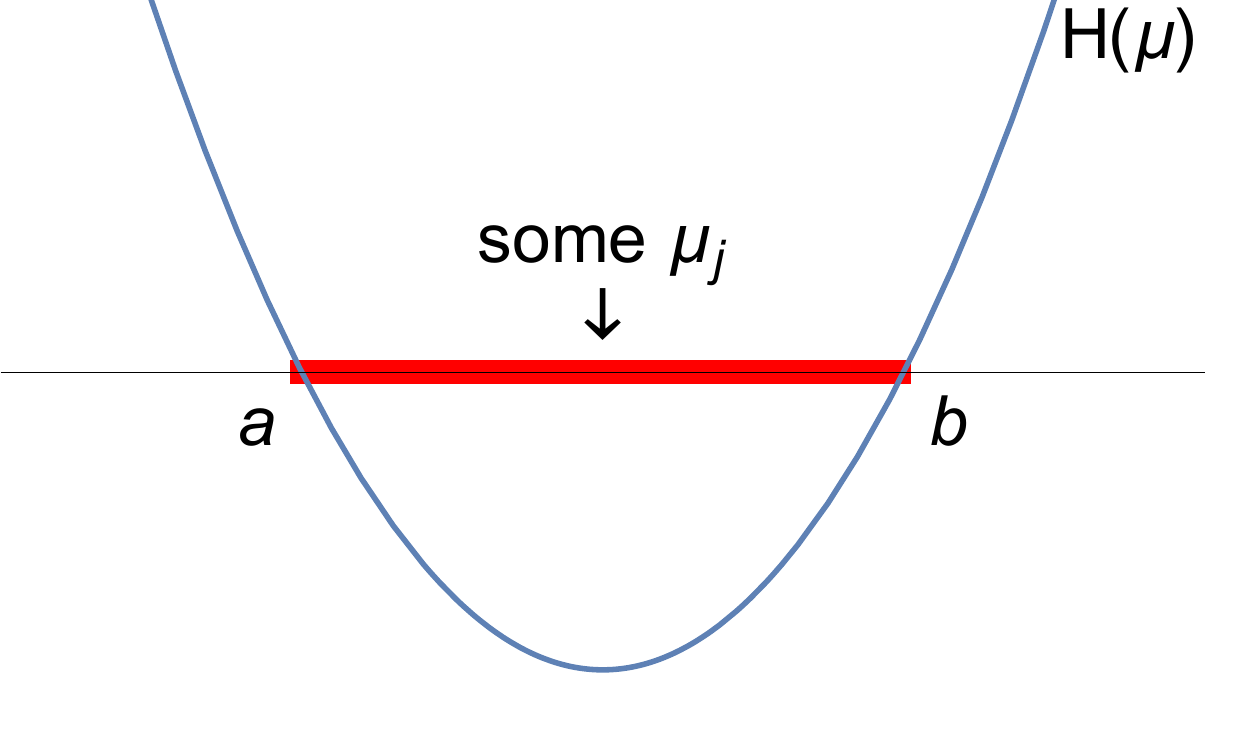}}
         \subfigure[][]{%
        \label{fig:unstable_band_2}%
\includegraphics[width=7.5cm, height=5.7cm]{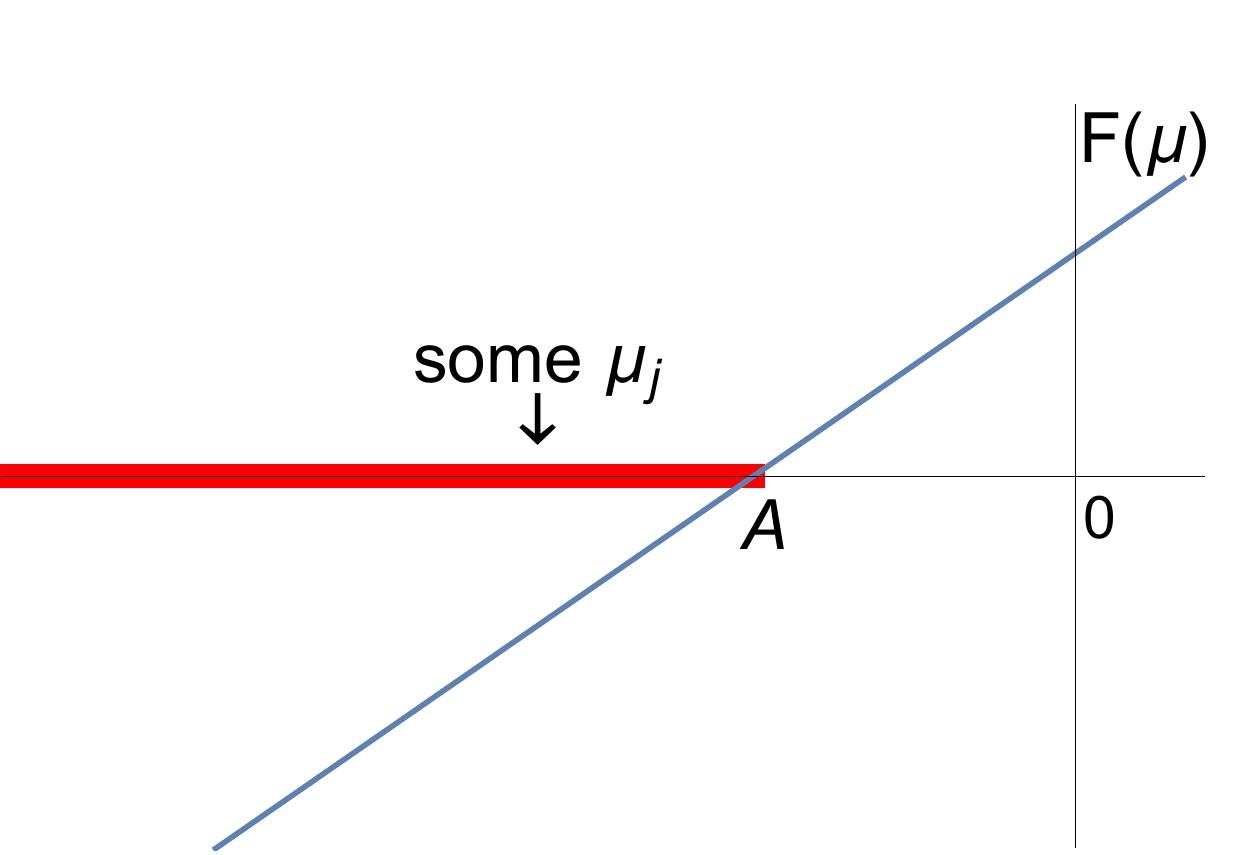}}%
\caption{There are two different ways that we can get instability. For the steady state to be unstable to spatial perturbation, we require $\text{Re}\,[\lambda(\mu_{j})]>0$ for some $j\neq 0$. This can happen if either \subref{fig:unstable_band_1} $H(\mu_{j})\overset{\text{def}}=k\mu_{j}^2-(kf_u+g_v)\mu_{j}+(f_ug_v-f_vg_u)<0$ or \subref{fig:unstable_band_2} $F(\mu_{j})\overset{\text{def}}=\mu_{j}(1+k)-(f_u+g_v)<0$ for some $j\neq 0$, where $a, b, A$ in the graphs are the intercepts along the $\mu$-axis. The figure shows general possible ways where instability can occur. Obviously, case \subref{fig:unstable_band_2} cannot happen if all the eigenvalues $\mu_j$ are positive.}%
\label{fig:unstable_band}%
\end{figure}

Now, we will consider the cases $\tau\geq0$ and $\tau<0$ separately. When $\tau<0$, the $2$nd order ``backwards" diffusion $\displaystyle{\frac{\partial u}{\partial t} = \tau \Delta u}$ is ill-posed, meaning the $2$nd order term destabilizes the system whereas the $4$th order term $\Delta^2u$ stabilizes the system. In other words, two different types of diffusion compete when the tension parameter $\tau$ is negative. However, such competition does not happen in $\tau\geq 0$ case. The $4$th and $2$nd order diffusion operators are each well-posed when $\tau\geq 0$. The case $\tau\geq 0$ is very similar to the traditional Turing analysis with the Laplacian diffusion, since all eigenvalues $\mu_j$ of \eqref{DE} are positive by the Rayleigh Quotient in Definition~\ref{def:muast}. On the other hand, the case $\tau<0$ is different from the traditional Turing instability since there are some negative eigenvalues.

\renewcommand{\thefigure}{\arabic{figure}}
\renewcommand{\thesubfigure}{(\roman{subfigure})}

\begin{figure}%
\centering
\subfigure[][]{%
\label{fig:unstable_band_negative}%
       \includegraphics[width=7.5cm, height=5cm]{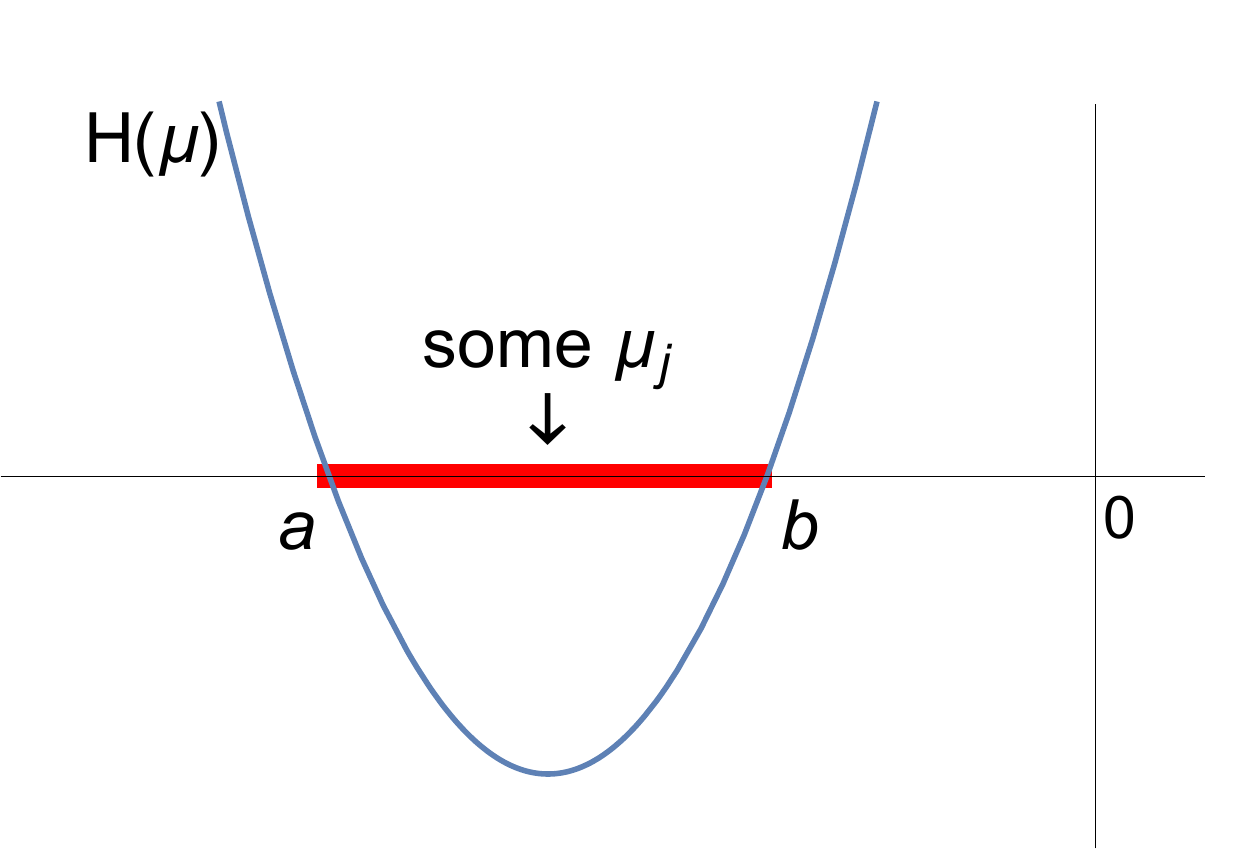}}
         \subfigure[][]{%
        \label{fig:unstable_band_positive}%
\includegraphics[width=7.5cm, height=5cm]{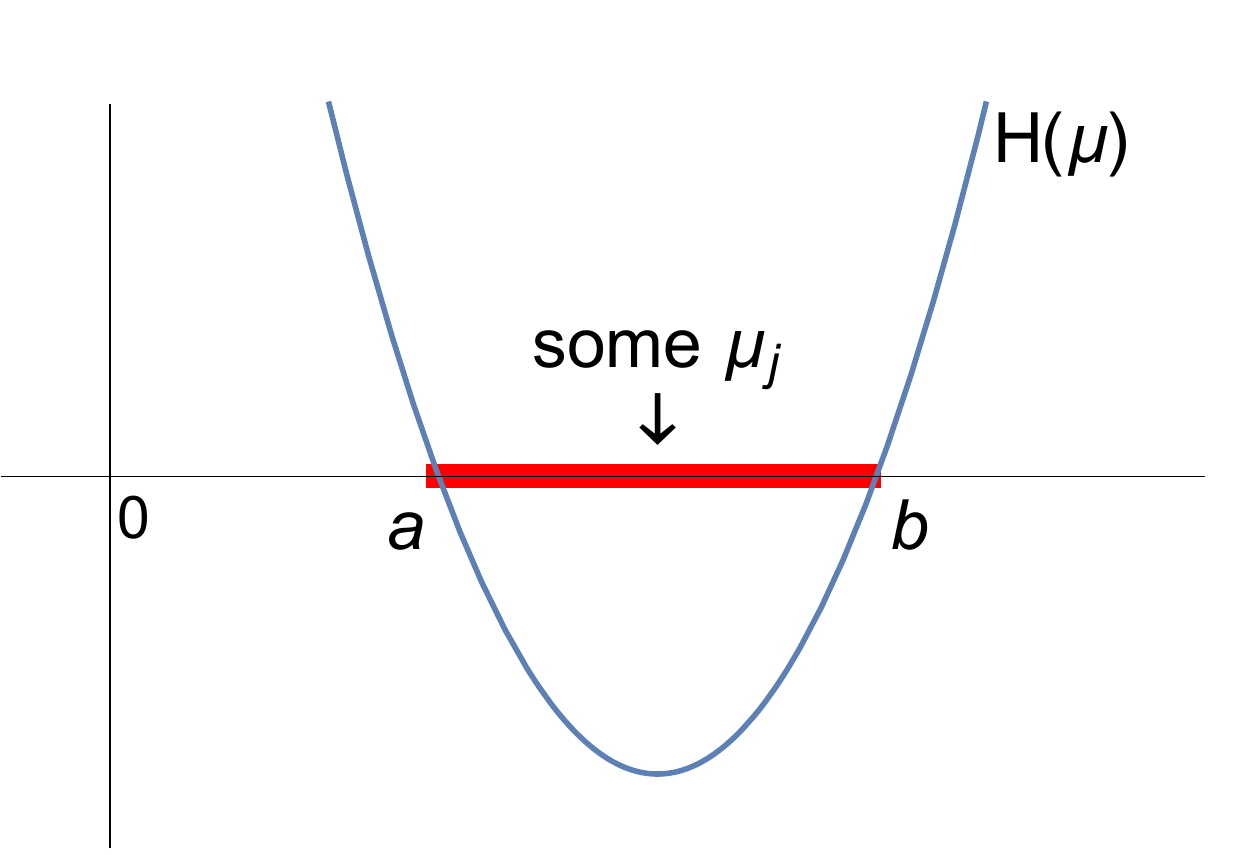}}%
\caption{The two ways to get $H(\mu_j)<0$. The distinct roots $a$ and $b$ of $H(\mu)=0$ must have the same sign since the vertical intercept is $f_ug_v-f_vg_u>0$ from the condition \eqref{secondcond}. When $\tau\geq0$, figure \subref{fig:unstable_band_positive} is the only possibility, because all $\mu_j$ are nonnegative. When $\tau<0$, there exists negative eigenvalues of $\Delta^2u-\tau\Delta u=\mu u$ and so both \subref{fig:unstable_band_negative} and \subref{fig:unstable_band_positive} are possible.}%
\label{fig:unstable_band_a}%
\end{figure}

\emph{Case $\tau\geq 0$.} When $\tau\geq 0$, the fact that $\mu_{j}$ is always positive (from Rayleigh Quotient in Definition~\ref{def:muast}) and $f_u+g_v<0$ from the stability condition \eqref{firstcond} mean $F(\mu_j)\geq 0$. So $\text{Re}\,[\lambda(\mu_{j})]>0$ if and only if $H(\mu_{j})<0$ for some $j$. See Fig.~\ref{fig:unstable_band_positive}. Since $f_u g_v - f_v g_u > 0$ by \eqref{secondcond}, we see $H(\mu_{j})<0$ if and only if \eqref{thirdcond}--\eqref{fourthcond} hold, meaning $H(\mu_j)=0$ has distinct positive roots:
\begin{align*}
(kf_u+g_v)^2-4k(f_ug_v-f_vg_u)&>0,\\
kf_u+g_v&>0,
\end{align*} since $\mu_{j}\geq0$ for all $j$. The first condition is a discriminant requirement. Hence, to be in Turing space, conditions $\eqref{thirdcond}$--$\eqref{fourthcond}$ are necessary and sufficient, when \eqref{firstcond}--\eqref{secondcond} hold.

\emph{Case $\tau< 0$.} Now, we will consider $\tau<0$. The difference from $\tau\geq 0$ comes from the fact that $\mu_{j}$ can be positive or negative and so both cases in Fig.~\ref{fig:unstable_band} can happen in order to get $\text{Re}\,[\lambda(\mu_{j})]>0$. Therefore, to be in Turing space, either $F(\mu_j)<0$ (which is $\mu_j<A(\vec{p})$ hence $\mu_1<A(\vec{p})$) or $H(\mu_{j})<0$ are necessary to hold. 

For $H(\mu_j)<0$, $H(\mu_j)=0$ must have distinct roots that have the same sign since the vertical intercept of the quadratic $H(\mu_j)$ is $f_u g_v - f_v g_u > 0$ by \eqref{secondcond} and this gives a further necessary condition \eqref{thirdcond}:
\begin{align*}
(kf_u+g_v)^2-4k(f_ug_v-f_vg_u)>0.
\end{align*} Also, the spectrum must intersect the interval $\left(a(\vec{p}), b(\vec{p})\right)$:
\[
\mu_{j}\in\left(a(\vec{p}), b(\vec{p})\right),
\] where $a(\vec{p})$ and $b(\vec{p})$ are the distinct roots of $H(\mu_j)=0$, that is, the quantities defined in \eqref{mupm}:
\begin{equation*}
a, b=a(\vec{p}), b(\vec{p})=\frac{(kf_u+g_v)\pm\sqrt{(kf_u+g_v)^2-4k(f_ug_v-f_vg_u)}}{2k}.
\end{equation*}
Note that we have two possibilities depending on the sign of $\mu_{j}$ (see Fig.~\ref{fig:unstable_band_a}). We do not have to satisfy $\eqref{fourthcond}$ because we are allowed to have negative wavenumber $\mu_{j}$ of $\Delta^2 - \tau\Delta$. We have shown that when $\tau<0$, in addition to conditions \eqref{firstcond}--\eqref{secondcond}, either condition \eqref{thirdcond} and $ \operatorname{Spec}(\Omega, \tau) \cap\left(a(\vec{p}), b(\vec{p})\right)\neq \emptyset$ are necessary to hold, or else $\mu_1<A(\vec{p})$ is necessary, for belonging to the Turing space.

Until now, we have showed necessary conditions to be in the Turing space, when $\tau<0$. To finish proving the theorem, we have to show the conditions are sufficient. We show that $\vec{p}$ belongs to the Turing space if \begin{align*}
&\text{  either \eqref{thirdcond} holds and} \operatorname{Spec}(\Omega, \tau) \cap\left(a(\vec{p}), b(\vec{p})\right)\neq \emptyset,\\
&\text{  or $\mu_1<A(\vec{p})$},
\end{align*} when $\tau<0$ and \eqref{firstcond}--\eqref{secondcond} are assumed.


Assume first we are in situation of Fig.~\ref{fig:unstable_band_1}, meaning \eqref{firstcond}--\eqref{thirdcond} hold and there exists at least one eigenvalue $\mu_{j}$ such that $\mu_{j} \in \left(a(\vec{p}), b(\vec{p})\right)$. Since $H(\mu_j)<0$, Turing instability occurs.

Assume now that we are in situation of Fig.~\ref{fig:unstable_band_2}, meaning \eqref{firstcond}--\eqref{secondcond} hold and 
\begin{align*}
\mu_1<\frac{f_u+g_v}{1+k}=A(\vec{p}).
\end{align*} Since $F(\mu_1)<0$ and so $\text{Re}\,[\lambda(\mu_1)]>0$, Turing instability occurs. These prove the theorem when $\tau<0$.

\end{proof}

\section{\bf Proof of Theorem~\ref{thm:muast} and Corollary~\ref{cor:turing}}\label{sec:pfthm_muast,pfcor_turing}
\begin{proof}[Proof of Theorem~\ref{thm:muast}]
\emph{Step 1:} Fix $\tau<0$. We will prove the theorem first on a one-dimensional interval.
Let $\Omega=(-R, R)$; an interval of length $2R$ centered at the origin. We start by finding a rescaling relation. Let $\tilde{x}=x/R$ and $v(\tilde{x})= u(x)$. Then $v$ is defined on the interval $(-1, 1)$. From the transformation,  the differential equation \eqref{DE} and the one-dimensional natural boundary conditions of the type \eqref{BC1}--\eqref{BC2} are converted into 
\begin{equation*}
v''''-\tau R^2v''=R^4\mu\left(\Omega, \tau\right)v,
\end{equation*} and
\begin{align*}
\begin{cases}
v''=0 &\quad \text{at $\tilde{x}=\pm 1$},\\
v'''-\tau R^2v'=0 &\quad \text{at $\tilde{x}=\pm 1$}.
\end{cases}
\end{align*} Changing variable like this leads to the rescaling relation:
\begin{equation}
\mu_j(\Omega, \tau) = R^{-4} \mu_j((-1, 1), \tau R^2) \label{rescalingrelation}.
\end{equation} (We rescaled since we know from \cite{CC17} how the eigenvalue $\mu_j((-1, 1), \tau R^2)$ behaves with respect to the parameter $\tau R^2$ when the domain $(-1, 1)$ is fixed.) 

Notice the following equivalent conditions, when $c>0$ is fixed:
\begin{align}
\mu_j((-R, R), \tau) &= -c\tau^2 \notag \\
\Longleftrightarrow R^{-4} \mu_j((-1, 1), \tau R^2) &= -c\tau^2 \qquad \text{from the rescaling} \notag\\
\Longleftrightarrow \mu_j((-1, 1), \tau R^2) &= -c(\tau R^2)^2 \notag \\
\Longleftrightarrow  \mu_j((-1, 1), \tilde{\tau}) &= -c \tilde{\tau}^2, \label{eqcond}
\end{align} where $\tilde{\tau}=\tau R^2$. There is at least one value $\tilde{\tau}<0$ and one index $j$ such that \eqref{eqcond} holds, since we know from \cite[Proposition $16$]{CC17} there is at least one intersection between the eigenvalue curves $\mu_j((-1, 1), \tilde{\tau})$ for $\tilde{\tau}<0$ and the parabola $y=-c\tilde{\tau}^2$. From the equivalent conditions, there is at least one value $R$ and one index $j$ such that 
\[
\mu_j((-R, R), \tau) = -c\tau^2.
\] Hence $\mu_1((-R, R), \tau) \leq -c\tau^2$. We have shown that
\begin{align*}
\text{for arbitrary $c>0$, there exists $R$ such that $\mu_1((-R, R), \tau) \leq -c\tau^2$}.
\end{align*} So 
\[
\mu_1^{\ast}(\tau)=\inf_{\Omega}\mu_1(\Omega, \tau) \leq -c\tau^2.
\] Letting $c\to\infty$ shows $\mu_1^{\ast}(\tau)=-\infty$.

\emph{Step 2:} Extend to $n$-dimensional cube in $\RR^n$. 
Firstly we show extension to a $2$-dimensional square domain. Let $\Omega_1=(-R, R)$ and $\Omega_2=(-R, R) \times (-R, R)$. The first eigenvalue $\mu_1$ is the minimum of Rayleigh quotient over the space of all functions $u\in H^2$, by using the Rayleigh-Ritz variational formula, that is,
\begin{align}
\mu_1(\Omega_1) &= \min_{u\in H^2(\Omega_1)} Q[u] = \min_{u\in H^2(\Omega_1)} \frac{\int_{-R}^R \left(|u''|^2 + \tau |u'|^2\right)\, dx}{\int_{-R}^R u^2\, dx},\notag\\
\mu_1(\Omega_2) &= \min_{v\in H^2(\Omega_2)} Q[v] = \min_{v\in H^2(\Omega_2)} \frac{\int_{-R}^R \int_{-R}^R \left(|D^2 v|^2 + \tau |\nabla v|^2\right)\, dx dy}{\int_{-R}^R\int_{-R}^R v^2\, dx dy}.\label{RQ2}
\end{align} We can take a function $u(x)$ of one variable in $H^2(\Omega_1)$ and regard it as a function of two variables, for instance, $v(x, y)=u(x) \in H^2(\Omega_2)$. Hence we obtain $H^2(\Omega_1)\subset H^2(\Omega_2)$. By taking minimum of each Rayleigh quotient $Q$ we get
\begin{align*}
\mu_1(\Omega_2) &= \min_{v\in H^2(\Omega_2)} Q[v] \leq \min_{u\in H^2(\Omega_1)} Q[u]=\mu_1(\Omega_1),
\end{align*} where the second integral of the right side of \eqref{RQ2} is cancelled because nothing depends on $y$ and so it comes down to the case of the first eigenvalue in one-dimensional domain. After taking infimum of $\mu_1$ and together with the above observation of one-dimensional case, we conclude that
\begin{align*}
\mu_1^{\ast}(\tau) = \inf_{\Omega_2} \mu_1(\Omega_2) = -\infty.
\end{align*} It is straightforward to generalize $2$-dimensional square case to $n$-dimensional cubes.

\end{proof}

\begin{proof}[Proof of Corollary~\ref{cor:turing}]
The point of Theorem~\ref{thm:muast} is that if $\tau<0$ then there exist domains that have arbitrarily negative value of $\mu_1$. Hence the condition
 \begin{align*}
\mu_1(\Omega, \tau)< A(\vec{p})
\end{align*} in Theorem~\ref{thm:turing} holds for some domain $\Omega$. Together with the hypotheses that $\vec{p}$ satisfies conditions \eqref{firstcond}--\eqref{secondcond}, we conclude by Theorem~\ref{thm:turing} that Turing instability occurs for the domain $\Omega$.
\end{proof}

\section{\bf Proof of Theorem~\ref{thm:instabilityregion} and Proposition~\ref{prop:moverightdown}}
Before we start the proof, we explain why we use the $``+"$ and $``-"$ notation for the sets $E_{\pm}$ and $O_{\pm}$. For $E_{+}(l)$ and $O_{+}(l)$ in Definition~\ref{def:instabilityregion}, the eigenvalues $\mu_l^{\text{even}}$ and $\mu_l^{\text{odd}}$, which lie between positive constants by the condition \eqref{fourthcond}, are positive. Hence, the sets $E_{+}$ and $O_{+}$ relate to eigenvalues that are in the upper half of the spectral plane. For $E_{-}(l)$ and $O_{-}(l)$ in Definition~\ref{def:instabilityregion}, the eigenvalues $\mu_l^{\text{even}}$ and $\mu_l^{\text{odd}}$ are negative because they are less than the negative constant $A(\vec{p})$, by assumption \eqref{firstcond}. Hence, the sets $E_{-}$ and $O_{-}$ relate to eigenvalues that are in the lower half of the spectral plane.

\begin{proof}[Proof of Theorem~\ref{thm:instabilityregion}]
\begin{enumerate}
\item ``$\supset$": Pick one case $E_{+}(1)$ as an example, since the other cases are similar. If conditions \eqref{firstcond}--\eqref{fourthcond} on the reaction-diffusion vector $\vec{p}$ are assumed, then we can apply Theorem~\ref{thm:turing} to show $E_{+}(1)\subset TS(\vec{p})$, as follows. Suppose $(R, \tau)\in E_{+}(1)$, so that by Definition~\ref{def:instabilityregion}, the eigenvalue branch $\mu_1^{\text{even}}$ satisfies
\begin{equation*}
R^{-4} \mu_1^{\text{even}}((-1, 1), \tau R^2) \in \left(a(\vec{p}), b(\vec{p})\right).
\end{equation*} Recall the domain $\Omega(R)$ is the interval $(-R, R)$. Together with the rescaling relation:
\begin{equation*}
\mu((-R, R), \tau) = R^{-4} \mu((-1, 1), \tau R^2)
\end{equation*} from \eqref{rescalingrelation}, we have
\[
\mu_1^{\text{even}}(\Omega(R), \tau)\in \left(a(\vec{p}), b(\vec{p})\right).
\] Hence by Theorem~\ref{thm:turing}, the reaction-diffusion vector $\vec{p}$ belongs to the Turing space $TS(\Omega(R), \tau)$, and so $(R, \tau)\in TS(\vec{p})$. We have shown
\begin{equation*}
TS(\vec{p})	\supset \bigcup_{l\geq 0} \big(E(l) \cup O(l)\big).
\end{equation*}

``$\subset$":
We will prove 
\begin{equation*}
TS(\vec{p})	\subset  \bigcup_{l\geq 0} \big(E(l) \cup O(l)\big)
\end{equation*} in the following. Suppose $(R, \tau)\in TS(\vec{p})$, where $\vec{p}\in TS\left(\Omega(R), \tau\right), R>0$. If $\tau\geq0$, from Theorem~\ref{thm:turing}, there exist some eigenvalue $\mu_j(\Omega(R), \tau)$ such that 
\begin{equation*}
\mu_j(\Omega(R), \tau)\in\left(a(\vec{p}), b(\vec{p})\right).
\end{equation*} From the analysis of the spectrum in \cite{CC17} we know that eigenvalues correspond to some $l$th branch of the spectrum $\mu_l$ (see Fig.~\ref{fig:freeeigenvalues}). Note that $a(\vec{p})>0$ from the condition \eqref{fourthcond} and so such $\mu_j(\Omega(R), \tau)$ are positive. Equivalently, there exist some $l$th even or odd eigenvalue branches $\mu_l^{\text{even}}$ or $\mu_l^{\text{odd}}$ such that   
\begin{equation*}
\mu_l^{\text{even}}(\Omega(R), \tau)\  \text{or}\  \mu_l^{\text{odd}}(\Omega(R), \tau)\in\left(a(\vec{p}), b(\vec{p})\right).
\end{equation*} Together with the rescaling relation, it is equivalent to 
\begin{equation*}
\mu_l^{\text{even}}\left((-1, 1), \tau R^2\right)\  \text{or}\  \mu_l^{\text{odd}}\left((-1, 1), \tau R^2\right)\in\left(a(\vec{p})R^4, b(\vec{p})R^4\right).
\end{equation*} Therefore, $(R ,\tau)$ belongs to some instability regions $E_{+}(l)$ or $O_{+}(l)$. The fact that $\mu_j(\Omega(R), \tau)$ is positive tells us $(R ,\tau)$ is in the $``+"$ regions. We have shown that if $(R, \tau)\in TS(\vec{p})$ with $\tau\geq 0$ then
\begin{equation*}
(R, \tau) \in \bigcup_{l\geq 0} \big(E_{+}(l) \cup O_{+}(l)\big).
\end{equation*}

Now if $\tau<0$, from Theorem~\ref{thm:turing}, either there exists some eigenvalue $\mu_j(\Omega(R), \tau)$ such that 
\begin{equation}\label{case1}
\mu_j(\Omega(R), \tau)\in\left(a(\vec{p}), b(\vec{p})\right), 
\end{equation} or else
\begin{equation}\label{case2}
\mu_1(\Omega(R), \tau)< A(\vec{p}).
\end{equation} The first case \eqref{case1} is the same as we showed when $\tau\geq0$. For the second case, recall from \cite[Section $5$]{CC17} that the first eigenvalue $\mu_1$ corresponds to the zero-th even or odd eigenvalue branch $\mu_0^{\text{even}}$ or $\mu_0^{\text{odd}}$ (shown in Fig.~\ref{fig:freeeigenvalues}). Note that $A(\vec{p})<0$ from the condition \eqref{firstcond} and so $\mu_1(\Omega(R), \tau)$ is negative. The second case \eqref{case2} is equivalent to 
\begin{equation*}
\mu_0^{\text{even}}(\Omega(R), \tau)\  \text{or}\  \mu_0^{\text{odd}}(\Omega(R), \tau)< A(\vec{p}).
\end{equation*} From the rescaling relation, 
\begin{equation*}
\mu_0^{\text{even}}\left((-1, 1), \tau R^2\right)\  \text{or}\  \mu_0^{\text{odd}}\left((-1, 1), \tau R^2\right)< A(\vec{p})R^4.
\end{equation*} Hence, $(R ,\tau)$ belongs to the instability regions $E_{-}(0)$ or $O_{-}(0)$. By combining when $\tau\geq0$ and the first and second cases of $\tau<0$, we have shown 
\begin{equation*}
(R, \tau) \in \bigcup_{l\geq 0} \big(E(l) \cup O(l)\big),
\end{equation*} where recall $E=E_{+}\cup E_{-}$, $O=O_{+}\cup O_{-}$.

\item The proof is similar to part (\eqref{part1}). ``$\supset$", except using $E_{-}(1)$ as the typical case instead of $E_{+}(1)$.
\end{enumerate}
\end{proof}

\begin{figure}
\centering
\includegraphics[scale=0.65]{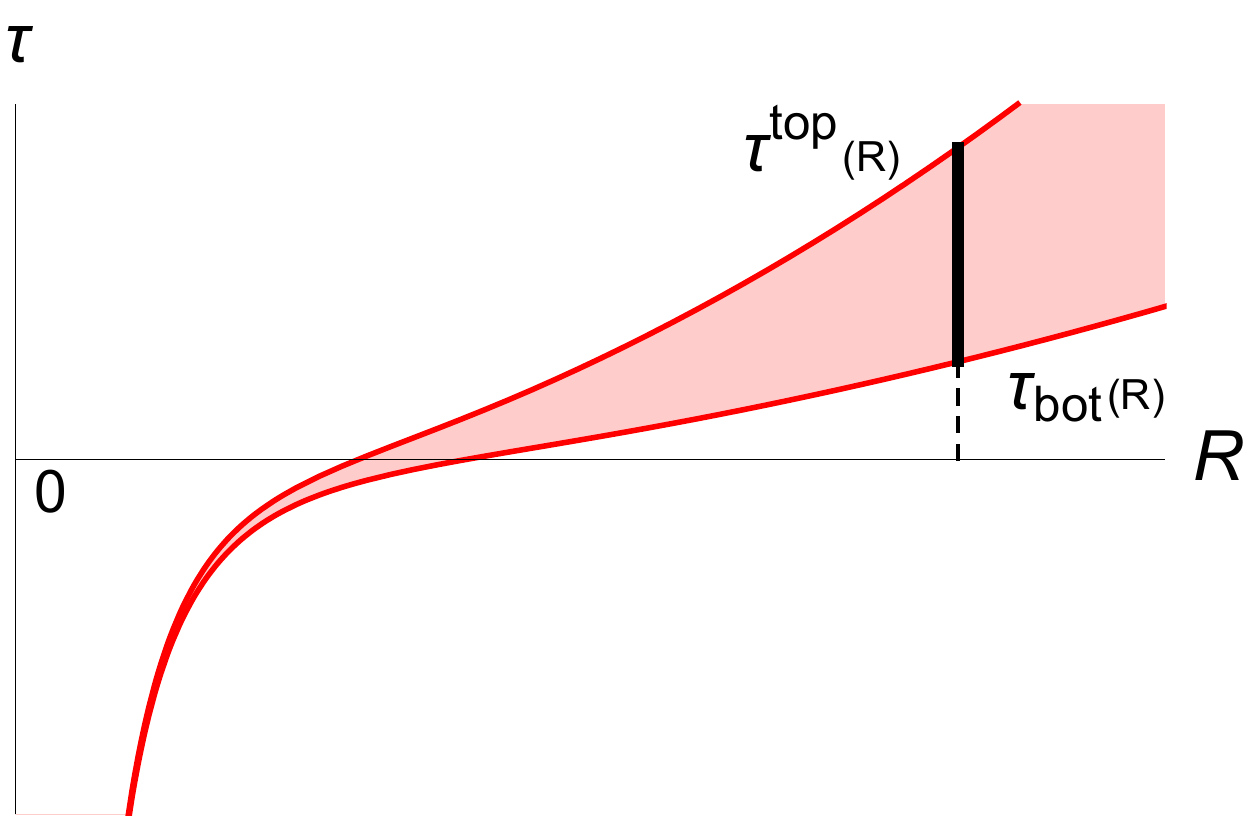}
\caption{\label{fig:move_downwards}For given $R$, there is a single interval of $\tau$ being in the instability region $E_{+}$. The top and bottom boundary curves of region $E_{+}$ are given by functions $\tau^{\text{top}}(R)$ and $\tau_{\text{bot}}(R)$.}
\end{figure}

\begin{proof}[Proof of Proposition~\ref{prop:moverightdown}]
\begin{enumerate} 
\item Fix $\vec{p}$ and $l$, and write $a=a(\vec{p}), b=b(\vec{p})$. We will prove the even case $E_{+}(l)$ and the odd case $O_{+}(l)$ is similarly obtained. Define two sets from the definition of the instability region $E_{+}(l)$ by the following:
\begin{align}
\text{Top}&= \{\left(R, \tau\right): R^{-4}\mu_{l}^{\text{even}}(\tau R^2)=b\}\label{topset},\\
\text{Bot}&= \{\left(R, \tau\right): R^{-4}\mu_{l}^{\text{even}}(\tau R^2)=a\}\label{bottomset}.
\end{align} We will prove that these sets are graphs of functions of $R$. 
First, we show that for given $R$ there is a single interval of $\tau$-values that satisfies the condition
\begin{equation*}
R^{-4}\mu_{l}^{\text{even}}(\tau R^2)\in(a, b)
\end{equation*} for being in the instability region $E_{+}(l)$ in Definition~\ref{def:instabilityregion}. The condition is equivalent to
\begin{equation*}
\tau  \in \left(R^{-2}(\mu_{l}^{\text{even}})^{-1}(aR^4), R^{-2}(\mu_{l}^{\text{even}})^{-1}(bR^4)\right),
\end{equation*} which is a single interval since $\mu_{l}^{\text{even}}(\tau)$ is a strictly increasing function \cite[Proposition $7$]{CC17} so that the inverse is uniquely defined. See Fig.~\ref{fig:move_downwards}. Hence the sets \eqref{topset} and \eqref{bottomset} are the graphs of the function:
\begin{align*}
\tau^{\text{top}}(R; l)&=R^{-2}(\mu_{l}^{\text{even}})^{-1}(bR^4),\\
\tau_{\text{bot}}(R; l)&=R^{-2}(\mu_{l}^{\text{even}})^{-1}(aR^4)\notag
\end{align*} It is clear from Definition~\ref{def:instabilityregion} that these are the top and bottom boundary curves of region $E_{+}(l)$. 

Now, we will prove that $E_{+}(l)$ moves downward as $l$ increases, in the sense that
\begin{align*}
\tau^{\text{top}}(R; l)&>\tau^{\text{top}}(R; l+1)>\tau^{\text{top}}(R; l+2)>\cdots,\\
\tau_{\text{bot}}(R; l)&>\tau_{\text{bot}}(R; l+1)>\tau_{\text{bot}}(R; l+2)>\cdots.
\end{align*} Notice that for all $\tau\in\RR$,
\begin{align*}
\mu_{l}^{\text{even}}(\tau)<\mu_{l+1}^{\text{even}}(\tau)
\end{align*} as proved in \cite[Proposition $6$]{CC17}. Since the eigenvalue branches $\mu_{l}^{\text{even}}(\tau)$ are strictly increasing to infinity, we know that the inverse functions satisfy
\begin{align*}
\left(\mu_{l+1}^{\text{even}}\right)^{-1}(b)<\left(\mu_{l}^{\text{even}}\right)^{-1}(b), 
\end{align*} for fixed $b$. Therefore, we obtain
\begin{align*}
\tau^{\text{top}}(R; l+1)=R^{-2}\left(\mu_{l+1}^{\text{even}}\right)^{-1}(bR^4)<R^{-2}\left(\mu_{l}^{\text{even}}\right)^{-1}(bR^4)=\tau^{\text{top}}(R; l),
\end{align*} and similarly for the bottom curves.

\item Now we consider $E_{-}(l)$ and $O_{-}(l)$, assuming conditions \eqref{firstcond}--\eqref{secondcond} hold. We will prove the even case $E_{-}(l)$ and the odd case $O_{-}(l)$ is similarly obtained. We will show the regions $E_{-}(l)$ are nested as $l$ increases in the sense that the boundary curve of $E_{-}(l)$ is nested as $l$ increases. We can express the boundary curve of $E_{-}(l)$ as the function of $R$ in a similar way to part (\ref{prop:moverightdown_1}):
\begin{equation*}
\tau^{\text{top}}(R; E_{-}(l))=R^{-2}(\mu_l^{\text{even}})^{-1}(AR^4).
\end{equation*} We will show that as $l$ increases,
\begin{align*}
\tau^{\text{top}}(R; E_{-}(l))>\tau^{\text{top}}(R; E_{-}(l+1))>\tau^{\text{top}}(R; E_{-}(l+2))>\cdots.
\end{align*} For all $\tau\in\RR$, like above we have
\begin{align*}
\mu_{l}^{\text{even}}(\tau)<\mu_{l+1}^{\text{even}}(\tau)
\end{align*} and so
\begin{align*}
\left(\mu_{l+1}^{\text{even}}\right)^{-1}(A)<\left(\mu_{l}^{\text{even}}\right)^{-1}(A), 
\end{align*} for fixed $A$. Therefore, we obtain
\begin{align*}
\tau^{\text{top}}(R; E_{-}(l+1))=R^{-2}\left(\mu_{l+1}^{\text{even}}\right)^{-1}(AR^4)<R^{-2}\left(\mu_{l}^{\text{even}}\right)^{-1}(AR^4)=\tau^{\text{top}}(R; E_{-}(l)).
\end{align*}

\end{enumerate}
\end{proof}


\begin{figure}
\centering
 \includegraphics[scale=0.45]{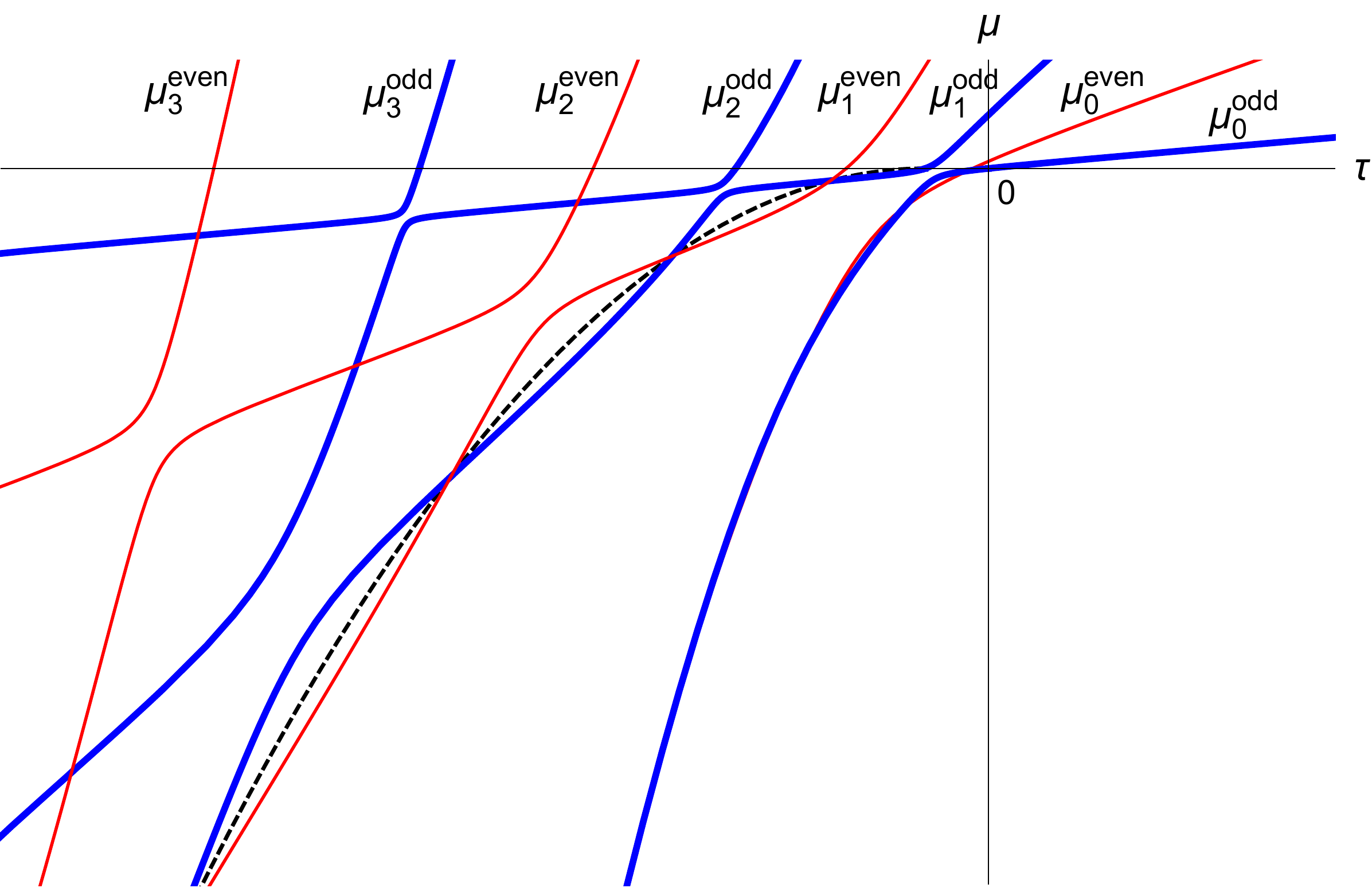}
              \caption{\label{fig:freeeigenvalues}Spectrum of $\Delta^2u-\tau\Delta u=\mu u$ with free boundary conditions. Blue curves (darker) are eigenvalue branches associated with odd eigenfunctions, red curves (lighter) are branches associated with even eigenfunctions. This spectrum was investigated recently by the author and Chasman \cite{CC17}.}
\end{figure}

\section{\bf Proof of Corollary~\ref{prop:stableregion}}

\begin{proof}[\unskip\nopunct]
To prove there exists stable region with $\tau<0$, for each fixed $\vec{p}$ satisfying \eqref{firstcond}--\eqref{fourthcond}, we study the region near the origin in Fig.~\ref{fig:three_instability_regions}. We will show:
\begin{align*}
&\text{$1$. The bottom boundary of $O_{+}(0)$ lies above the horizontal axis $\tau=0$.}\\
&\text{$2$. The boundary of $O_{-}(0)$ lies below the horizontal axis $\tau=0$.}\\
&\text{$3$. The boundary curves of $E_{+}(0)$, $E_{-}(0)$, and $O_{+}(1)$ have $\tau\to-\infty$ as $R\to 0$.}
\end{align*} Since the instability regions $E_{\pm}(l)$ and $O_{\pm}(l)$ move downwards as $l$ increases, by Proposition~\ref{prop:moverightdown}, there exist some regions near the origin that are not covered by any instability regions in $\{(R, \tau): R>0\ \text{and}\ \tau<0\}$.

\emph{Step $1$:} the bottom boundary of $O_{+}(0)$ lies above the horizontal axis $\tau=0$.

Recall the number $a=a(\vec{p})>0$ from \eqref{fourthcond}. Since $\mu_0^{\text{odd}}(\tau)$ is strictly increasing with $\mu_0^{\text{odd}}(0)=0$ \cite[Proposition 7 and Section 3]{CC17}, we have $(\mu_0^{\text{odd}})^{-1}(aR^4)>0$. Hence,
\begin{equation*}
\tau_{\text{bot}}(R; O_{+}(0))>0,
\end{equation*} which means the bottom boundary of $O_{+}(0)$ lies above the horizontal axis $\tau=0$.

\emph{Step $2$:} the boundary of $O_{-}(0)$ lies below the horizontal axis $\tau=0$. 

We know from the spectrum, the eigenvalue branch $\mu_0^{\text{odd}}$ is approximately a straight line $\mu_0^{\text{odd}}(\tau)\simeq\pi^2\tau/4$ near the origin \cite[Section $3. 3$]{CC17}. Hence the boundary curve of $O_{-}(0)$ satisfies 
\begin{align*}
\lim_{R\to0^{+}} \frac{\tau^{\text{top}}(R; {O_{-}(0)})}{R^2}= \lim_{R\to0^{+}} \frac{(\mu_{0}^{\text{odd}})^{-1}(AR^4)}{R^4}= \frac{4A}{\pi^2}<0
\end{align*} because $A<0$ by \eqref{firstcond}. Since the number $4A/\pi^2$ is negative, the limit shows that the curve $\tau^{\text{top}}(R; {O_{-}(0)})$ lies below some negative quadratic, near the origin.

\emph{Step $3$:} the boundary curves of $E_{+}(0)$, $E_{-}(0)$, and $O_{+}(1)$ have $\tau\to-\infty$ as $R\to 0$.

As $R\to0^{+}$, the limit of the upper boundary curve of $E_{+}(0)$ is
\begin{align*}
\lim_{R\to0^{+}} \tau^{\text{top}}(R; {E_{+}(0)})= \lim_{R\to0^{+}} R^{-2}(\mu_{0}^{\text{even}})^{-1}(bR^4)=-\infty,
\end{align*} since the inverse function $(\mu_{0}^{\text{even}})^{-1}(0)=-\pi^2/4$ from the spectrum of $\Delta^2u-\tau\Delta u=\mu u$ \cite[Sections $3$ and $4$]{CC17}. The same is true for $O_{+}(1)$, since $(\mu_{1}^{\text{odd}})^{-1}(0)=-\pi^2$.

As $R\to0^{+}$, the limit of the boundary curve of $E_{-}(0)$ is
\begin{align*}
\lim_{R\to0^{+}} \tau^{\text{top}}(R; {E_{-}(0)})= \lim_{R\to0^{+}} R^{-2}(\mu_{0}^{\text{even}})^{-1}(AR^4)=-\infty,
\end{align*} since the inverse function $(\mu_{0}^{\text{even}})^{-1}(0)=-\pi^2/4$.

\end{proof}

Corollary~\ref{prop:stableregion} tells us that there is some region that does not belong to any of Turing space $TS(\vec{p})$, as shown by the unshaded regions in Fig.~\ref{fig:three_instability_regions}.

\section{\bf Periodic boundary conditions in one dimension: Turing instability regions for the fourth order diffusion operator $\Delta^2-\tau\Delta$}
In this section, we will illustrate the Turing instability region of the periodic boundary conditions, which has similar shape with the region for the free boundary conditions. The eigenvalue problem is
\begin{equation*}
u''''-\tau u''=\mu u
\end{equation*} for $-R<x<R$.
We can explicitly express the spectrum of periodic case in one dimension for $-1<x<1$ as
\begin{equation*}
\mu_{l}^{\text{per}}(\tau)=(l\pi)^4+\tau(l\pi)^2, \quad l\geq0, 
\end{equation*} where eigenfunctions can be taken as the even function $u_{\text{e}}(x)=\cos(l\pi x)$ or the odd function $u_{\text{o}}(x)=\sin(l\pi x)$. Note that all the eigenvalues have multiplicity $2$, except for $l=0$. In the periodic case, we do not need to separate the even and odd instability regions since these regions are the same because eigenvalues associated to even and odd eigenfunctions are the same. We illustrate the spectrum in $(\tau, \mu)$-plane, as shown in Fig.~\ref{fig:periodiceigenvalues}. Each branch is a straight line. We see there is a parabola $\mu=-(\tau+\pi^2)^2/4$ on which the intersections of consecutive eigenvalue branches lie. The same parabola occurs also in the spectrum of the free boundary conditions as the parabola on which the intersections of the first even and odd eigenvalue branch $\mu_{1}^{\text{even}}$ and $\mu_{1}^{\text{odd}}$ lie \cite[Proposition $12$]{CC17}. On top of that, we see the spectrum of periodic boundary conditions and the spectrum of free boundary conditions behave in asymptotically similar way: compare Fig.~\ref{fig:periodiceigenvalues} and Fig.~\ref{fig:freeeigenvalues}. Actual crossings occur in the spectrum of periodic boundary conditions, whereas there are barely-avoided crossings along eigenvalue branches for free boundary conditions. A pattern of barely-avoided crossings leads to a pattern of nearly-linear segments in the free case, while the periodic spectrum contains actual line segments. Similar spectral behavior of periodic and free boundary conditions should generate similar shape of the instability regions (Fig.~\ref{fig:three_instability_regions} and Fig.~\ref{fig:periodicinstabilityregions}).
\begin{figure}
 \includegraphics[scale=0.5]{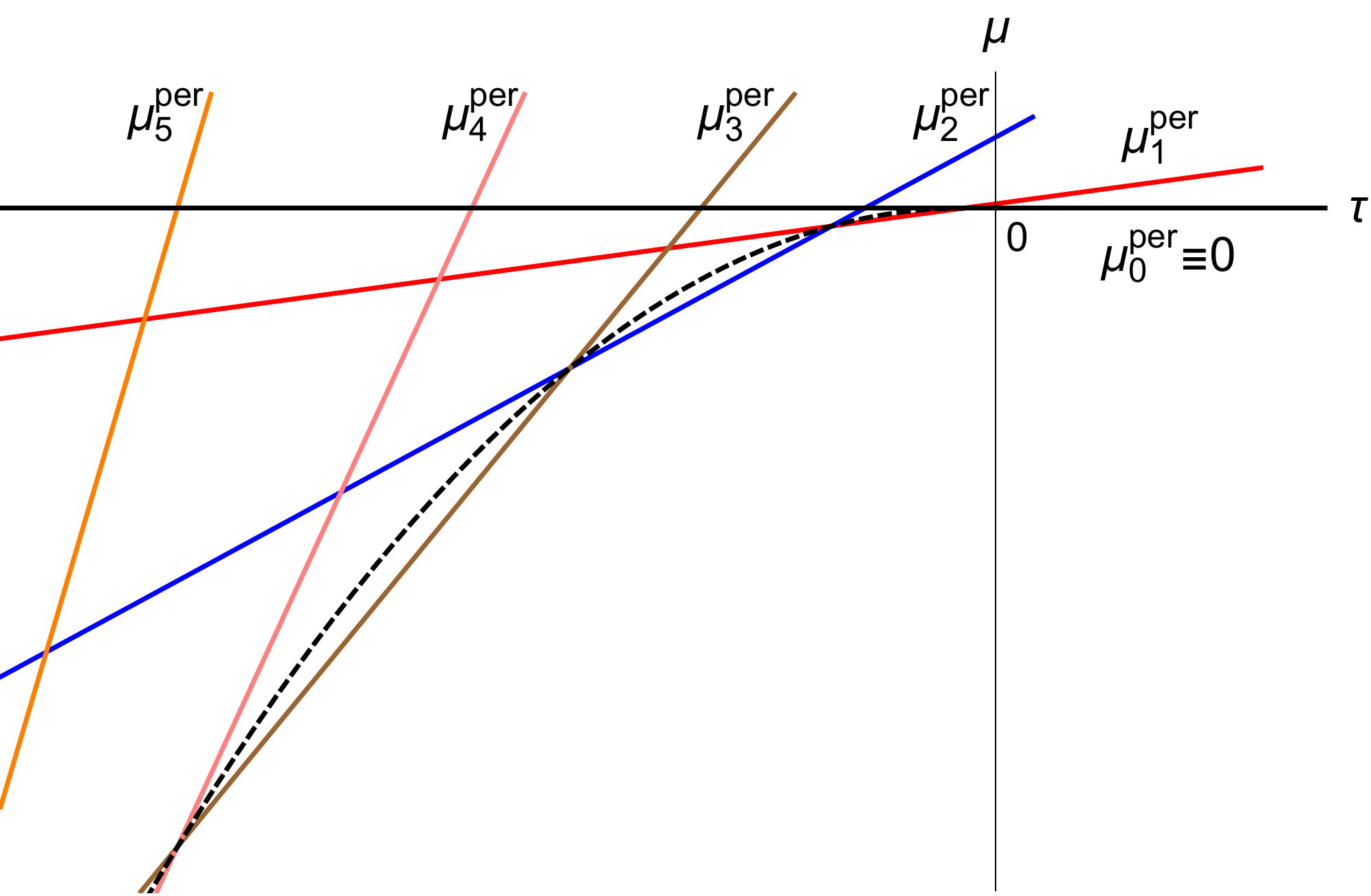}
\caption{\label{fig:periodiceigenvalues}Spectrum of $u''''-\tau u''=\mu u$ on $(-1, 1)$ with periodic boundary conditions. The dashed curve is the parabola $\mu=-(\tau+\pi^2)^2/4$ on which the intersections of consecutive eigenvalue branches lie.}
\end{figure}

Assume conditions \eqref{firstcond}--\eqref{fourthcond} hold on the reaction-diffusion vector $\vec{p}$. With a Turing analysis similar to Definition~\ref{def:instabilityregion} and Theorem~\ref{thm:instabilityregion}, we can express the instability region of periodic boundary conditions explicitly as follows, for the interval $(-R, R)$:
\begin{align*}
I_{+}^{\text{per}}(l) &= \{\left(R, \tau\right): aR^4<(l\pi)^4+\tau R^2(l\pi)^2< bR^4\},\\
I_{-}^{\text{per}}(l) &= \{\left(R, \tau\right): (l\pi)^4+\tau R^2(l\pi)^2< AR^4\ \text{and}\ \tau<0\}.
\end{align*} 

The instability regions of the first four eigenvalue branches of periodic case are illustrated in Fig.~\ref{fig:periodicinstabilityregions} in the $(R, \tau)$-plane.

\begin{figure}
\centering
 \includegraphics[scale=0.45]{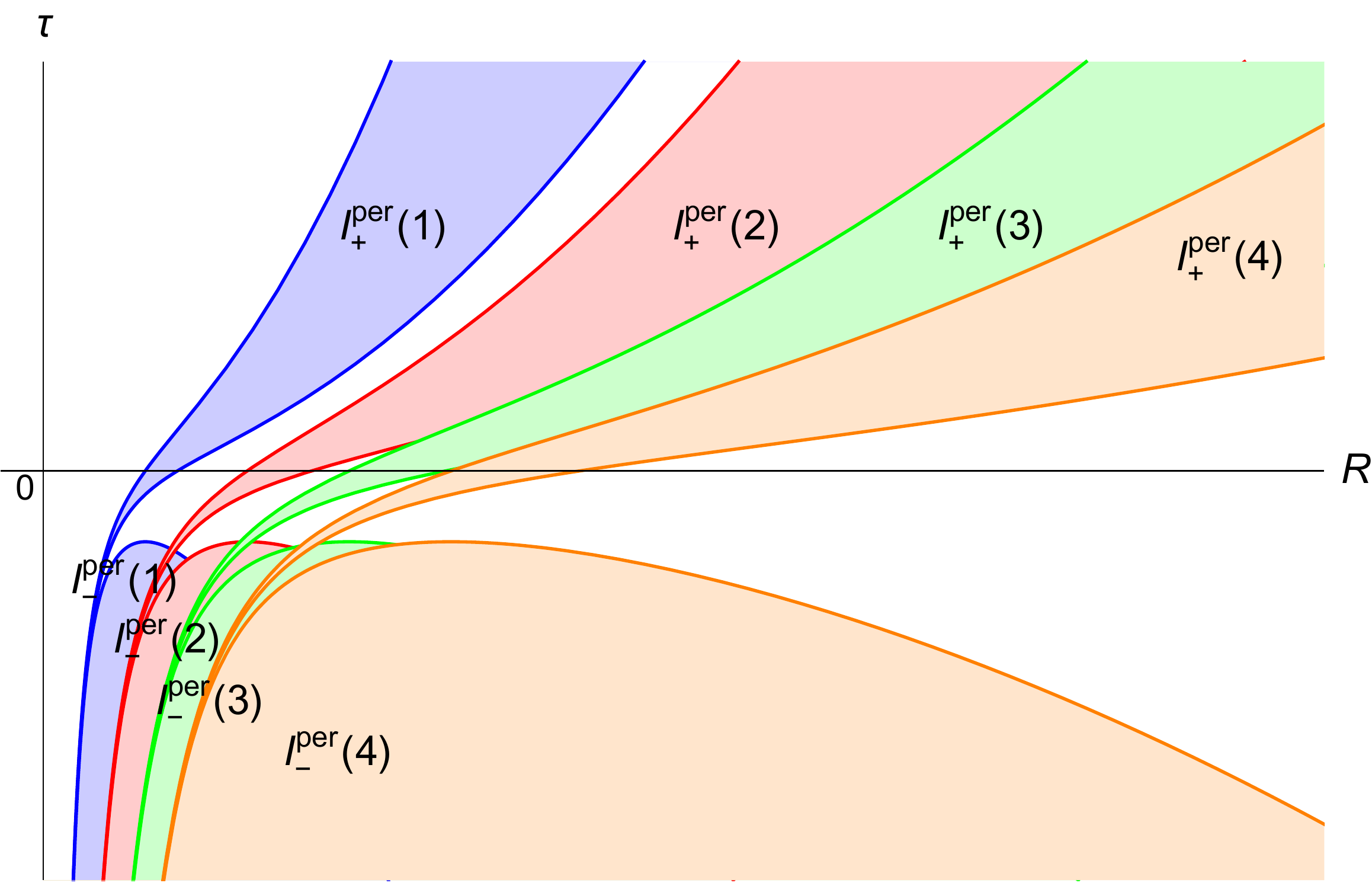}
              \caption{\label{fig:periodicinstabilityregions}The instability regions associated to periodic boundary conditions on the interval $(-R, R)$, for eigenvalue branches $l=1, \dots, 4$, assuming conditions \eqref{firstcond}--\eqref{fourthcond} for the reaction-diffusion vector $\vec{p}$. (The figure uses parameter values $\vec{p}=\param$.) Each colored region describes the instability region associated to the corresponding colored eigenvalue branch in Fig.~\ref{fig:periodiceigenvalues}. Points $(R, \tau)$ in shaded regions belong to the Turing space $TS(\vec{p})$. }
\end{figure}

\section{Acknowledgments}
This research was supported by the University of Illinois Research Board (award RB17002).

\end{document}